\newtheorem{theorem}{Theorem}
\newtheorem{corollary}[theorem]{Corollary}
\newtheorem{proposition}[theorem]{Proposition}
\newtheorem{definition}[theorem]{Definition}
\theoremstyle{definition}
\newtheorem{example}[theorem]{Example}
\newtheorem{remark}[theorem]{Remark}
\newtheorem{construction}[theorem]{Construction}
\renewcommand{\epsilon}{\varepsilon}
\newcommand{\N}{\mathbb{N}}
\newcommand{\C}{\mathbb{C}}
\newcommand{\be}{\begin{eqnarray}}
\newcommand{\ee}{\end{eqnarray}}
\newcommand{\ben}{\begin{enumerate}}
\newcommand{\een}{\end{enumerate}}
\newcommand{\ba}{\begin{array}}
\newcommand{\ea}{\end{array}}
\newcommand{\mc}{\mathcal}
\newcommand{\ogd}{$(\Omega,G)$-decompo\-sition} 
\newcommand{\omr}{{\rm rank}_{\Omega}}
\newcommand{\omgr}{{\rm rank}_{(\Omega,G)}}
\def\p@subsection{}
\def\p@subsubsection{}
\let\originalleft\left
\let\originalright\right
\renewcommand{\left}{\mathopen{}\mathclose\bgroup\originalleft}
\renewcommand{\right}{\aftergroup\egroup\originalright}
\newcommand\xqed[1]{%
  \leavevmode\unskip\penalty9999 \hbox{}\nobreak\hfill
  \quad\hbox{#1}}
\newcommand\demo{\xqed{$\triangle$}}
\begin{document}

\title{Tensor decompositions on simplicial complexes with invariance}

\author{Gemma De las Cuevas}
\address{Institute for Theoretical Physics, Technikerstr.\ 21a,  A-6020 Innsbruck, Austria}

\author{Matt Hoogsteder Riera}
\address{Institute for Theoretical Physics, Technikerstr.\ 21a,  A-6020 Innsbruck, Austria}

\author{Tim Netzer}
\address{Department of Mathematics, Technikerstr.\ 13,  A-6020 Innsbruck, Austria}

\date{\today}
\begin{abstract}
We develop a framework to analyse invariant decompositions of elements of tensor product spaces. Namely, we define an invariant decomposition with indices arranged on a simplicial complex, and which is explicitly invariant under a group action. We prove that this decomposition exists for all invariant tensors after possibly enriching the simplicial complex. As a special case we recover tensor networks with translational invariance and the symmetric tensor decomposition. We also define an invariant separable decomposition and purification form, and prove similar existence results. Associated to every decomposition there is a rank, and we prove several inequalities between them. For example, we show by how much the rank increases when imposing invariance in the decomposition, and that the tensor rank is the largest of all ranks. Finally, we apply our framework to nonnegative tensors, where we define a nonnegative and a positive semidefinite decomposition on arbitrary simplicial complexes with group action. We show a correspondence to the previous ranks, and as a very special case recover the nonnegative, the positive semidefinite, the completely positive and the completely positive semidefinite transposed decomposition.
\end{abstract}

\maketitle
\tableofcontents

\section{Introduction}

Tensor products appear prominently in almost all areas of mathematics, theoretical physics  and numerous other branches of science. Multilinear maps, higher-order derivatives and  homogeneous  polynomials can be seen as tensors, for example. In quantum mechanics, the state space of a multi-particle quantum system is modelled as a tensor product of the individual state spaces. Tensors, that is, elements of tensor product spaces, are also used in electrical engineering, psychometrics, data analysis (see \cite{Br10b} and references therein), and in relation to machine learning (see \cite{Gl19} and references therein), to cite a few examples.

It is a basic fact that every  element of a tensor product space $$v\in \mc{V}= \mc{V}_0\otimes\cdots \otimes \mc{V}_n$$ can be expressed as a finite sum of elementary tensors  $$v^{[0]}\otimes  \cdots \otimes v^{[n]}$$ where  $v^{[i]}\in \mc{V}_i$ for  $i=0,\ldots, n$.
A \emph{tensor network}, often used in quantum information theory and condensed matter physics \cite{Or18},
is a certain way of arranging the indices in this sum: 
they are chosen to reflect the physical arrangement of the individual degrees of freedom in a given quantum system.
For example, the indices could be arranged in a one-dimensional circle as in $$v = \sum_{\alpha_0,\ldots,\alpha_{n} =1}^r v_{\alpha_0,\alpha_1}^{[0]} \otimes v_{\alpha_1,\alpha_2}^{[1]} \otimes\cdots \otimes  v_{\alpha_{n-1},\alpha_{n}}^{[n-1]}\otimes  v_{\alpha_{n},\alpha_0}^{[n]}.$$Alternatively,  there could be a single joint index, 
\be\label{eq:tensdecomp}
v = \sum_{\alpha=1}^r v_\alpha^{[0]}\otimes v_\alpha^{[1]} \otimes \cdots \otimes v_\alpha^{[n]}.
\ee 
In either case, the smallest possible such $r$ defines the corresponding rank of the element $v$.

Symmetries play a  central role in theoretical physics and mathematics. 
Characterising the symmetries of a system is both of fundamental importance, as they reveal the conserved quantities, and of practical importance, as symmetric systems have fewer degrees of freedom, and thus allow for more efficient parametrisations. 
In this paper we focus on  \emph{external} symmetries. That is, 
we assume that a group $G$ acts on the set $\{0,1,\ldots, n\}$, and we consider the induced linear action of $G$ on $\mc V$, i.e.\ 
$$
g: v^{[0]} \otimes  \cdots \otimes v^{[n]} \mapsto v^{[g0]} \otimes  \cdots \otimes v^{[gn]}.
$$ 
We then say that $v\in \mc{V}$ has an external symmetry (given by $G$) if it is fixed by this action. 
In contrast, in \emph{internal} symmetries one typically has a representation $U_g: \mc{V}_i \to  \mc{V}_i$ of a group $G$, and one says that $v$ has an internal global symmetry if $(U_g)^{\otimes n} v=v$, or an internal local symmetry if $(U_g)^{\otimes l} v=v$ for a certain subset $l$ of subsystems, as in a lattice gauge theory. Internal symmetries  have been characterised in the context of tensor networks, e.g., in \cite{Sc10b,Si10b}.

In the context of the tensor decompositions considered above, if $v$ has an external symmetry, it is desirable to find a decomposition that makes this symmetry explicit, that is, to find an \emph{invariant} decomposition.  
For example, if the system is arranged in a one-dimensional circle with cyclic symmetry, an invariant  decomposition would be of the form 
$$ 
v = \sum_{\alpha_0,\ldots, \alpha_n=1}^r v_{\alpha_0,\alpha_1} \otimes v_{\alpha_1,\alpha_2} \otimes \cdots \otimes v_{\alpha_n,\alpha_0}. 
$$
This is known as the translationally invariant matrix product operator form, and the minimal such $r$ is called the  t.\ i.\ operator Schmidt rank \cite{De19}. 
Another example is a single joint index with full  symmetry, in which an invariant decomposition would be of the form
$$
v =  \sum_{\alpha=1}^r v_{\alpha} \otimes v_{\alpha} \otimes \cdots \otimes v_{\alpha},
$$
 which is known as a symmetric tensor decomposition, and the minimal such $r$ is called the symmetric tensor rank \cite{Co08c}.

In this paper we develop a theoretical framework to study  invariant tensor decompositions and their corresponding ranks, which specialises in particular to the above ones. 
Namely, we consider elements of tensor product spaces and express them as a sum of elementary tensor factors.
The indices in the sum are arranged over a simplicial complex $\Omega$, which is a well-studied object in topology.  
In addition, we consider a group $G$ acting on the simplicial complex. 
We then define a corresponding invariant tensor decomposition, called the \ogd, 
and an associated rank as the minimal number of terms of that decomposition, called $\omgr$. 
We then address the following questions: 
\emph{Does every invariant element have an \ogd?}
 Or, more precisely, 
 \emph{what are the conditions on $\Omega$ and $G$ that guarantee that every invariant element has an \ogd?}

 Our main result is  that such an invariant decomposition always exists,  provided that the indices are chosen and grouped in the right way  (\cref{thm:ogd}).
 More precisely, we show that every invariant element has an \ogd{}, 
 but only after possibly  raising the weights of the facets of the simplicial complex, and refining the group action.
This is the reason why we have to work with \emph{weighted simplicial complexes}, instead of just simplicial complexes.

In addition, we define the separable \ogd{} and the $(\Omega,G)$-purification form as a generalisation of the 
(translationally invariant) separable decomposition and (t.i.) purification form studied in \cite{De19}, and prove similar existence results. 
These decompositions incorporate different notions of  positivity into the local vectors.

 We also study inequalities between the ranks. 
 First, we  prove several inequalities between the rank, the separable rank and the purification rank on arbitrary simplicial complexes and with arbitrary group actions. 
Second, we study how these ranks are modified when changing the group action. 
For example, we study how much the rank increases when transforming a non-invariant decomposition into an invariant one.  
Third, we study how the ranks change when the simplicial complex is modified. 

Finally, we apply our framework to entry-wise nonnegative tensors.  
First we define the nonnegative and the positive semidefinite decomposition on arbitrary simplicial complexes with group action.  
These specialise to the nonnegative, positive semidefinite, completely positive and completely positive semidefinite transposed decompositions when the simplicial complex is an edge, and the group is trivial or the cyclic group of order 2. 
We then prove a correspondence with the previous decompositions  (\cref{thm:corresp}), thereby generalising the results of  \cite{De19}, 
and use it to prove inequalities for these new ranks.

This paper is organized as follows.
In \cref{sec:wsc} we  introduce the relevant notions related to simplicial complexes and group actions. 
In \cref{sec:inv} we  define the \ogd{} and prove our main existence results, among which \cref{thm:ogd} is the most important one.
In \cref{sec:ineq} we  prove inequalities between the ranks, and
in \cref{sec:nn} we apply our framework to several decompositions of nonnegative tensors.
We close with the conclusions an outlook in \cref{sec:outlook}.

\section{Weighted simplicial complexes and group actions}
\label{sec:wsc}

In this section we introduce the relevant notions of weighted simplicial complexes with group actions, which provides the underlying topological structure on which we will consider tensor decompositions. 
Specifically, in \cref{ssec:wsc} we define weighted simplicial complexes, and in 
\cref{ssec:actions}  group actions.
 We write $[n]$ for the set $\{0,\ldots, n\}$ and $\mathcal P_n$ for its power set $\mathcal P([n])$ throughout this paper.

\subsection{Weighted simplicial complexes}
\label{ssec:wsc}

We start by defining weighted simplicial complexes (see, for example, \cite{Da90} for more information). Examples are provided in \cref{ssec:actions} below. 

\begin{definition}\label{def:wsc}
(i) A \emph{weighted simplicial complex (wsc) on $[n]$}   is a function $$\Omega\colon \mathcal P_n\to \mathbb N$$ such that $S_1\subseteq S_2$ implies that   $\Omega(S_1)$ divides $\Omega(S_2).$  A wsc is called a \emph{simplicial complex (sc)} if it only takes values $0$ and $1$.
 
(ii) A set $S\in\mathcal P_n$ with $\Omega(S)\neq 0$ is called a \emph{simplex} of $\Omega$. We will assume throughout that each singleton $\{i\}$ is a simplex, and call the elements $i\in[n]$ the \emph{vertices} of the complex.
A maximal simplex (with respect to inclusion) is called a \emph{facet} of $\Omega.$
We denote by $$\mathcal F:=\left\{ F\in\mathcal P_n\mid  F\mbox{ facet of } \Omega \right\}$$ the set of all facets,  and  for each vertex $i\in[n]$ by  
$$\mathcal F_i:=\left\{F\in\mathcal F\mid i\in F\right\} $$the set of facets  that $i$ is contained in. 

The restriction of $\Omega$ to $\mathcal F$ and $\mathcal F_i$ makes these sets multisets, for which we use the notation $$\widetilde{\mathcal F} \ \mbox{ and  } \ \widetilde{\mathcal F}_i.$$  Each facet $F$ is contained in $\widetilde{\mathcal{F}}$ precisely $\Omega(F)$-many times.  There is the canonical \emph{collapse map} $$c\colon \widetilde{\mathcal F}\to \mathcal F, \quad c\colon \widetilde{\mathcal F}_i\to \mathcal F_i,$$ mapping all copies of a facet to the underlying facet.

(iii) Two vertices $i,j$ are \emph{neighbors}, if $$\mathcal F_i\cap\mathcal F_j\neq\emptyset \qquad (\mbox{equivalently if } \widetilde{\mathcal F}_i\cap\widetilde{\mathcal F}_j\neq\emptyset)$$ Two vertices are \emph{connected} if one can be reached from the other through a sequence of neighborly points. The wsc is \emph{connected} if any two vertices are connected.
 \demo\end{definition}

\begin{remark}\label{rem:wsc}
(i) A sc $\Omega$ is the characteristic function of a subset $\mathcal A\subseteq \mathcal P_n$. By the definition of a sc,  $\mathcal A$ is closed under passing to subsets. This is precisely how an (abstract) simplicial complex is usually defined. 

(ii) A wsc is a special case of a multihypergraph \cite{Bo98}, in which all simplices of a facet are contained, and  where the multiplicities satisfy  condition (i) of \cref{def:wsc}. 
Intuitively, one can think of a wsc as a well-formed mutihypergraph. 

For example, a multigraph without self-loops is a wsc in which every vertex has  value $1$ and every edge has the value given by its multiplicity in the multigraph. This applies in particular to every simple graph. 
Another example is a hypergraph \cite{Bo98} in which all simplices of a facet are contained: 
this is a sc in which the corresponding simplices have weight 1.

In fact, our framework could be formulated with multihypergraphs  (see also \cref{rem:sym}), but the slightly less general notion of a wsc is easier to define, digest and work with, in our opinion. 
\demo\end{remark}

\subsection{Group actions}\label{ssec:actions}

We start with some general definitions concerning group actions, and then consider actions on weighted simplicial complexes. We assume that the reader is familiar with the usual concept of a group acting on a set, as defined in \cite{La02} for example. The identity element of a group $G$ will always be denoted by $e$.

\begin{definition}\label{def:action}
(i) Let $G$ be a group acting on the sets $X$ and $Y$. A function $f\colon X\to Y$  is called  \emph{$G$-linear} if $$f(gx)=gf(x)$$ holds for all $x\in X,g\in G$. In case that $G$ acts trivially on $Y$, we instead call $f$ \emph{$G$-invariant}. 

(ii) If $G$ acts on $X$, then for any map $f\colon X\to Y$ and any $g\in G$ we define a new map \begin{align*}{}^gf\colon X&\to Y \\ x &\mapsto f(g^{-1}x). \end{align*}  
We have $${}^h\left({}^gf\right)={}^{hg} f\ \mbox{ and }\ {}^e f=f.$$ In particular, the mapping $f\mapsto{}^gf$ is a bijection on the set of all functions from $X$ to $Y$.
If $f$ is only defined on a subset $X'\subseteq X$, then ${}^gf$ is defined on $$gX'=\{ gx\mid x\in X'\}\subseteq X.$$

(iii) An action of $G$ on $X$ is \emph{free}, if  ${\rm Stab}(x)=\{e\}$ for every $x\in X$, where $${\rm Stab}(x):=\left\{ g\in G\mid gx=x\right\}.$$

(iv) An action of $G$ on $[n]$ is \emph{blending}, if whenever $\{g_00,\ldots,g_nn\}=[n]$ for certain $g_0,\ldots, g_n\in G$, then there is some $g\in G$ with $gi=g_ii$ for all  $i=0,\ldots, n$. 
\demo\end{definition}

We now introduce the main notion of a group action on a wsc.

\begin{definition}
(i) A \emph{group action of $G$ on the wsc $\Omega$} consists of the following:
 \begin{itemize}
 \item An action of $G$ on $[n]$, such that $\Omega$ is $G$-invariant with respect to the induced action of $G$ on $\mathcal P_n$ (i.e.\ the action permutes vertices in such a way that simplices become simplices of the same weight). This then induces an action of $G$ on $\mathcal F$. 
 \item An action of $G$ on $\widetilde{\mathcal F}$, such that the collapse map $$c\colon{\widetilde{\mathcal F}}\to\mathcal F$$ is $G$-linear (we also say the action of $G$ on $\widetilde{\mathcal F}$ \emph{refines} the action of $G$ on $\mathcal F$). 
  \end{itemize}

(ii) An action of $G$ on the wsc $\Omega$ is called \emph{free} if the action of $G$ on $\widetilde{\mathcal F}$ is free.
\demo\end{definition}

\begin{remark}\label{rem:grac}(i) Since a wsc has finitely many vertices, we will usually assume that the group $G$ is finite as well. Since groups always act by permutations, we could also assume that $G$ is a subgroup of the permutation group $S_{n+1}$, but sometimes it is much more convenient not to choose the latter representation.

(ii) A group action permutes the vertices $[n]$ of the wsc $\Omega$ in a way that preserves the weighted adjacency structure. 
The action induces an action of $G$ on $\mathcal F$, where facets from the same orbit have the same weight. 
Each $g\in G$  provides a weight-preserving bijection \begin{align*}g\colon \mathcal F_i&\to\mathcal F_{gi}\\ F&\mapsto gF.\end{align*}

(iii) To obtain a group action on a wsc, one has to provide additional information, namely how elements $g\in G$ permute the different copies of facets when passing from a facet $F$ to the facet $gF$. 
Clearly, a group action can always be refined (that is, defined compatibly on $\widetilde{\mathcal F}$), but there might be more than one way to do so. 
For each vertex $i$ and each group element $g$, we then obtain the following commutative diagram:
$$
 \xymatrix{\widetilde{\mathcal F}_i\ar[r]^g\ar[d]_c & \widetilde{\mathcal F}_{gi} \ar[d]^c \\ \mathcal F_i \ar[r]_{F\mapsto gF} & \mathcal F_{gi}}
$$

(iv) For a sc, the notion of group action precisely covers the usual notion of a group acting by automorphisms. 

(v) The notion of a blending group action just refers to the action of $G$ on $[n]$. It means that any possible permutation within any orbit is provided by some group element.

(vi) The notion of a free group action  on a wsc involves the action of $G$ on $\widetilde{\mathcal F}$. Note that an action of $G$ on $\Omega$ can be free, without the underlying  action of $G$ on $[n]$ or on $\mathcal F$ being free. 
In fact, any action of $G$ on $\Omega$ can be refined to a free action, after possibly increasing the weights of the facets, as we will show in \cref{prop:extend}. In combination with \cref{thm:ogd}, this is why we consider weighted simplicial complexes instead of just simplicial complexes here.

(vii) An action $G$ on a set $X$ is free if and only if there is a $G$-linear map $${\bf z}\colon X\to G$$ where $G$ acts on itself by left-multiplication (this  action is clearly free). To define ${\bf z}$ for a free action, choose an element $x$ in each orbit and map $gx$ to $g$. The other implication is clear.
\demo\end{remark}

\begin{example}\label{ex:wsc}
(i) The sc $\Sigma_n$ that maps each subset of $[n]$ to $1$  is called the \emph{$n$-simplex}. 
\bigskip
\begin{center}
\begin{tikzpicture}
\filldraw (-1,0) circle (2pt);
\filldraw (1,0) circle (2pt);
\filldraw (0,1.5) circle (2pt);
\draw[thick] (-1,0) -- (1,0) -- (0,1.5)-- cycle;
\filldraw[opacity=0.4] (-1,0) -- (1,0) -- (0,1.5);
\put (-40,-5) {$0$};
\put (35,-5) {$1$};
\put (-2,48) {$2$};
\end{tikzpicture}
\end{center}

\bigskip\noindent
It has only one (multi-)facet, i.e.\ $\mathcal F=\widetilde{\mathcal F}=\{[n]\}$.  Any group action on $[n]$ is a group action on $\Sigma_n$. The action of the full permutation group on $[n]$ is blending. 
The only free action on $\Sigma_n$  is the  action from the trivial group.   However, if the weight of the (only) facet is raised to $\vert G\vert$, any action from $G$ on $[n]$ has a free refinement, as we will see in \cref{prop:extend}.

(ii) For $n\geq 1$, the \emph{complete graph} $\mathcal K_n$ is the sc with weight one on all sets $\{i,j\}$, for $i,j\in [n]$, and otherwise $0$. 

\bigskip
\begin{center}
\begin{tikzpicture}
\filldraw (-1,0) circle (2pt);
\filldraw (1,0) circle (2pt);
\filldraw (0,1.5) circle (2pt);
\filldraw (0.25,0.7) circle (2pt);
\draw[thick] (-1,0) -- (1,0) -- (0,1.5)-- cycle;
\draw[thick,dashed] (1,0) -- (0.25,0.7);
\draw[thick, dashed] (0,1.5) -- (0.25,0.7);
\draw[thick, dashed] (-1,0) -- (0.25,0.7);
\put (-40,-5) {$0$};
\put (35,-5) {$1$};
\put (-2,48) {$2$};
\put (-5,19) {$3$};
\end{tikzpicture}
\end{center}

This sc has $n+ 1 \choose 2$ facets. Again, any group action on $[n]$ is a group action on $\mathcal K_n$. The action of the full permutation group is blending but not free.

(iii) For $n\geq 1$, the \emph{line of length $n$} is the sc $\Lambda_n$ corresponding to  the following graph:

\bigskip
\begin{center}
\begin{tikzpicture}
\filldraw (-2,0) circle (2pt);\put (-60,-15){$0$};
\filldraw (-1,0) circle (2pt);\put (-31,-15){$1$};
\filldraw (0,0) circle (2pt);\put (-2,-15){$2$};
\put (20,-2.5) {$\cdots$};
\filldraw (2,0) circle (2pt);\put (55,-15){$n$};
\draw[thick] (-2,0) -- (-1,0);
\draw[thick] (-1,0) -- (0,0);
\draw[thick] (0,0) -- (0.4,0);
\draw[thick] (1.5,0) -- (2,0);
\end{tikzpicture}
\end{center}

\bigskip\noindent The set $\mathcal F=\widetilde{\mathcal F}$ has $n$ elements.
The only non-trivial group action on $\Lambda_n$ is by the cyclic group with two elements $G=C_2$, where the generator inverts the order of vertices, i.e.\ vertex $i$ is sent to $n-i$. 
This action is free if and only if $n$ is even, and blending if and only if  $n \leq 2$. For $n$ odd, the action admits a free refinement if the weight of the middle edge is increased to $2$.

(iv) For $n\geq 3$, the \emph{circle of length $n$} is the sc $\Theta_n$  corresponding to  the following graph:

\bigskip
\begin{center}
\begin{tikzpicture}
\filldraw (-1,0) circle (2pt);\put (-40,-3) {$0$};
\filldraw (-0.707,0.707) circle (2pt);\put (-30,20) {$1$};
\filldraw (0,1) circle (2pt);\put (-3,35) {$2$};
\filldraw (0.707,0.707) circle (2pt);\put (30,20) {$3$};
\filldraw (1,0) circle (2pt);\put (40,-3) {$4$};
\filldraw (0,-1) circle (2pt);\put (-13,-38) {$n-2$};
\filldraw (-0.707,-0.707) circle (2pt);\put (-50,-23) {$n-1$};
\draw[thick](-1,0)--(-0.707,0.707) -- (0,1)--(0.707,0.707) -- (1,0)-- (0.9,-0.5);
\draw[thick](0.4,-0.9)--  (0,-1)-- (-0.707,-0.707)--(-1,0) ;
\put(13,-23) {\rotatebox[origin=c]{35}{$\cdots$}};
\end{tikzpicture}
\end{center}

\bigskip\noindent It has $n$ facets.  There is for example the canonical action of the cyclic group $G=C_n$. 
It is free but not blending.

(v) We have $\Sigma_1=\mathcal K_1=\Lambda_1$, which is just the simple edge, having precisely one (multi)-facet. The only interesting group action is by $C_2=S_2$, which is blending but not free (although the action on $\{0,1\}$ is free!).
The \emph{double edge}  is the wsc $\Delta$ on $\mathcal P_1$ that assigns the value $1$ to $\{0\}, \{1\}$ and the value $2$ to $\{0,1\}.$

\bigskip
\begin{center}
\begin{tikzpicture}
\filldraw (-1,0) circle (2pt);\put (-40,-3){$0$};
\filldraw (1,0) circle (2pt);\put (35,-3){$1$};
\draw[thick] (-1,0) to[out=-40, in=220] (1,0); \put (-3,15){$\mathfrak a$};
\draw[thick] (-1,0) to[out=40, in=140] (1,0);\put (-3,-22){$\mathfrak b$};
\end{tikzpicture}
\end{center}

\bigskip\noindent
In this case 
  $$\mathcal F_0=\mathcal F_1=\mathcal F=\{ \{0,1\}\}$$ are singletons, but  $$\widetilde{\mathcal F}_0=\widetilde{\mathcal F}_1=\widetilde{\mathcal F}=\{\mathfrak a,\mathfrak b\}$$ are not.   If $C_2$ also flips $\mathfrak a$ and $\mathfrak b$, the action is free.
    
 (vi) Let $G\neq\{e\}$ be a nontrivial finite group with generating set $e\notin S\subseteq G$. We first define the Cayley graph $$\Gamma(G,S)=(V,E)$$ as the oriented graph with vertex set $V=G$, where $(g,h)\in E$ is an edge if and only if there exists some $s\in S$ with $gs=h$. We now define the corresponding wsc $\mathcal C(G,S)$, called the Cayley complex, on the vertex set $V$ by assigning the value $1$ to all vertices, and for $g\neq h\in V$ $$\mathcal C(G,S)\left(\{g,h\}\right):=\left\{\begin{array}{cl} 2 &\colon \{(g,h),(h,g)\}\subseteq  E \\ 1 &\colon  \#\left(\{(g,h),(h,g)\}\cap E\right)= 1 \\ 0 & \colon \mbox{else.} \end{array} \right.$$ Since $S$ is a generating set for $G$, the wsc $\mathcal C(G,S)$ is connected, and its facets are all of cardinality $2$. Their weights indicate whether there are one or two oriented edges between the corresponding vertices in the Cayley graph.  
 The set $\widetilde{\mathcal F}_g$ is  identified with $S\times\{{\rm in},{\rm out}\}$, for each $g\in V$, 
and the set $\widetilde{\mathcal F}$ is  identified with $E$. 
 
 The group $G$ acts on itself by left-multiplication. This provides a free action of $G$ on the wsc $\mathcal C(G,S),$ by letting $G$ act on the elements of $\widetilde{\mathcal F}=E$ entry-wise.  Note that the double edge $\Delta$ and the circle $\Theta_n$ are special cases of this construction, where $G=C_2$ and $C_n$, respectively, and 
 the generating set is $S=\{1\}$. 
\demo\end{example}

We  now prove what we have already seen in \cref{ex:wsc} (i), (iii) and (v). 

\begin{proposition}\label{prop:extend}
Any action of the finite group $G$ on the wsc $\Omega$ has a free refinement  after possibly increasing the weights of the facets of $\Omega$.
\end{proposition}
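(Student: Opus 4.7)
The plan is to enlarge the weight of every facet by the factor $\vert G\vert$, and then realise the new refined multiset as the product $\widetilde{\mathcal F}\times G$, on which $G$ acts diagonally: the first factor uses the original refinement, and the second factor uses left multiplication of $G$ on itself. Since left multiplication of $G$ on itself is free (\cref{rem:grac}(vii)), so is the diagonal action, regardless of how badly the original action on $\widetilde{\mathcal F}$ failed to be free.

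Concretely, first I would define $\Omega'(F) := \vert G\vert\cdot\Omega(F)$ for every facet $F$ of $\Omega$ and $\Omega'(S) := \Omega(S)$ for every non-facet $S$, and check that $\Omega'$ is still a wsc: for any $S\subsetneq F$ with $F$ a facet we have $\Omega'(S) = \Omega(S)\mid\Omega(F)\mid\vert G\vert\Omega(F) = \Omega'(F)$, and all other divisibility relations are unchanged. Because $\Omega'$ differs from $\Omega$ only by a uniform scalar on facets, it is still $G$-invariant on $\mathcal P_n$, so the induced action of $G$ on $\mathcal F$ coincides with the one coming from $\Omega$. In particular, by construction the new multiset $\widetilde{\mathcal F}'$ can be identified with $\widetilde{\mathcal F}\times G$ under the collapse map $c'(x, g) := c(x)$, since each facet $F\in\mathcal F$ then has $\vert G\vert\cdot\Omega(F) = \Omega'(F)$ preimages, as required.

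Second, I would define the $G$-action on $\widetilde{\mathcal F}'$ by $h\cdot(x,g) := (hx, hg)$. The collapse map $c'$ is $G$-linear, since $c'(h\cdot(x,g)) = c(hx) = h\cdot c(x) = h\cdot c'(x,g)$, and the action is free, since $h\cdot(x,g) = (x,g)$ forces $hg = g$ and hence $h = e$. There is no serious obstacle: the only reason one has to enlarge the weights at all is to create enough room in every fibre of $c'$ for $G$ to act by its regular representation in the second coordinate, and scaling all facet weights by $\vert G\vert$ achieves this uniformly across orbits.
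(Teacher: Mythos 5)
Your proof is correct and follows essentially the same route as the paper: the paper also multiplies every facet weight by $\vert G\vert$, labels the new copies of a facet as $F_i^{g}$ (i.e.\ identifies the enlarged multiset with $\widetilde{\mathcal F}\times G$), defines the action by $g\cdot F_i^{h}:=(gF_i)^{gh}$, and concludes freeness from the freeness of left multiplication of $G$ on itself. The only cosmetic difference is that the paper makes the partial collapse $\bar{\mathcal F}\to\widetilde{\mathcal F}$ explicit to stress that the new action refines the originally given action on $\widetilde{\mathcal F}$, which your diagonal action does as well.
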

\begin{proof} 
Assume $G=\left\{ g_1,\ldots, g_r\right\}$ with  $r=\vert G\vert$.  Let  $\bar\Omega$  be the wsc obtained by multiplying  the weights of all facets  of $\Omega$ by $r$.
Assume $\Omega(F)=m$ for some $F\in\mathcal F$. We denote the $m$ copies of $F$ in $\widetilde{\mathcal F}$ by $F_1,\ldots, F_m$. For any $g\in G$ we know that $gF_1,\ldots, gF_m$ are the copies of $gF$. Now label the $rm$ copies of $F$ in the new multiset $\bar{\mathcal F}$ by $$F_1^{g_1},\ldots, F_1^{g_r}, \ldots, F_m^{g_1},\ldots, F_m^{g_r},$$ i.e.\ every copy $F_i\in\widetilde{\mathcal F}$ is replaced by   $r$ duplicates, indexed by the group elements. The collapse map $\bar c\colon\bar{\mathcal F}\to \mathcal F$ factors through $\widetilde{\mathcal F}$ via the partial collapse map $$\tilde c\colon\bar{\mathcal F}\to \widetilde{\mathcal F};\ F_i^{g}\mapsto F_i.$$
$$\xymatrix{\bar{\mathcal F} \ar[dr]_{\bar c} \ar[r]^{\tilde c} & \widetilde{\mathcal F}\ar[d]^{c} \\ & \mathcal F}$$
  We now define $$g\cdot F_i^{h}:= (gF_i)^{gh}$$ as the $gh$-th duplicate of the facet $gF_i$. This defines an action of $G$ on $\bar{\mathcal F}$ which makes $\tilde c$ a $G$-linear map, i.e.\ the action refines the given action on $\widetilde{\mathcal F}$. Since the action of $G$ on itself by left-multiplication is  free, this new action on $\bar\Omega$ is free. \end{proof}

Remark that we do not claim that this refinement is optimal. 
In the above construction, the weight of every facet is multiplied by the cardinality of $G$, but 
there might another free refinement that increases less the multiplicity of each facet.


\section{Invariant tensor decompositions and ranks}
\label{sec:inv}

In this section we define and study several different tensor decompositions and tensor ranks on a wsc. 
Specifically, in \cref{ssec:inv} we prove the main result of this paper, namely the existence of invariant decompositions in many cases. 
In \cref{ssec:sep} we specialise to the separable decomposition,
and in \cref{ssec:puri} to the purification form. 
We also prove subadditivity of the ranks and provide many examples along the way.

Throughout this section, we fix a wsc $\Omega$ with an  action from the group $G$.  
For each $i\in[n]$ we fix  a  $\mathbb C$-vector space     $\mathcal V_{i}$ (called the \emph{local vector space at site $i$}), and whenever $i,j$ are in the same orbit, the spaces must coincide. Unless otherwise mentioned,  we do not impose any further  conditions on the spaces $\mathcal V_i$---they can be, for example, infinite dimensional.
We  define the \emph{global vector space} as $$\mathcal V:=\mathcal V_{0}\otimes\cdots\otimes\mathcal V_{n}.$$ Note that we always consider the algebraic tensor product, even if the spaces are infinite dimensional and have more structure, such as Hilbert spaces. So by the very definition, every element of $\mathcal V$ is a finite sum of elementary tensors.

The action of $G$ on $[n]$  induces a linear action on $\mathcal V$, by permuting the tensor factors. An element $v\in\mathcal V$ is called \emph{$G$-invariant} if it is invariant under this action. The subspace of invariant elements is denoted  $\mathcal V_{\rm inv}$.

For two sets $X$ and $Y,$ the set $Y^X$ contains, by definition, all functions from $X$ to $Y$. If $X$ is finite, such a function is often written as the tuple of its values. For example,  for any set $\mathcal I$ a function  $\alpha\in\mathcal I^{\widetilde{\mathcal F}}$ could be written as a tuple with entries from $\mathcal I$, indexed over the finite set $\widetilde{\mathcal F}$.  However, the functional point of view turns out to be much more flexible and less technical in most of our proofs below. For example, when $\alpha$ is seen as a function,  for  $i\in[n]$ the  restriction of $\alpha$ to $\widetilde{\mathcal F}_i$ $$\alpha_{\mid_{ \widetilde{\mathcal F}_i}}\in\mathcal I^{\widetilde{\mathcal F}_i}$$ is easily defined in the obvious way. We also call it the \emph{restriction of $\alpha$ to $i$} and  write $\alpha_{\mid_i}$ instead. In tuple notation this means that we erase those entries of the tuple whose indices do not belong to $\widetilde{\mathcal F}_i$, thus making it possibly shorter. We will stick to the functional viewpoint whenever possible, and use the tuple notation only in some explicit example.

\subsection{The invariant decomposition}
\label{ssec:inv}

 We now define the basic invariant tensor decomposition, called the $(\Omega,G)$-decomposition. Explicit examples of such decompositions are provided in \cref{ex:ogd} below. Since the definition might look quite abstract at the beginning, let us first explain the main idea behind it.  We  consider  sums of elementary tensors, where these indices take finitely many values. The summation indices will be placed on the facets of $\Omega$. Each elementary tensor in such a  sum is composed of vectors from the local spaces, and each local vector is indexed by just those facets  that contain the corresponding vertex. The reader might want to have a look at \cref{ex:ogd} first to get an idea on how the decompositions look like.

\begin{definition}\label{def:ogd}
(i) For $v\in\mathcal V$, an \emph{$(\Omega,G)$-decomposition of $v$} consists of  a  finite set $\mathcal I$ and  families $$V^{[i]}=\left(v^{[i]}_{\beta}\right)_{\beta\in \mathcal I^{\widetilde{\mathcal F}_i}}$$  with all $v^{[i]}_{\beta}\in \mathcal V_i$,  for all $i\in[n]$,   such that: 
\begin{itemize} 
\item[(a)] We have $$v=\sum_{\alpha\in\mathcal I^{\widetilde{\mathcal F}}}v^{[0]}_{\alpha_{\mid 0}}\otimes v^{[1]}_{\alpha_{\mid 1}}\otimes\cdots\otimes v^{[n]}_{\alpha_{\mid n}}.$$
\item[(b)] For all $i\in[n], g\in G$ and   $\beta\in \mathcal I^{\widetilde{\mathcal F}_i} $ we have $$v_{\beta}^{[i]}=v_{{}^g \beta}^{[gi]}$$ where ${}^g\beta$ is the function defined in \cref{def:action} (ii). 
\end{itemize}

(ii) The smallest cardinality of an index set $\mathcal I$  among all $(\Omega,G)$-decompo\-sitions of $v$  is called the \emph{$(\Omega,G)$-rank of $v$}, denoted  $${\rm rank}_{(\Omega,G)}(v).$$  

(iii) For the trivial group action  we call an \ogd{} just \emph{$\Omega$-decomposition}, and write $\omr(v)$ for the rank.
\demo\end{definition}

\begin{remark}
(i) The family $V^{[i]}$ from \cref{def:ogd} (i) is called the \emph{local tensor at site $i$}, and its elements are called the \emph{local vectors at site $i$}. 
Condition (a) specifies how the local vectors need to be combined in order to obtain $v$. 
The wsc $\Omega$ hereby determines which indices to use at the different vertices, and thus  how different sites interact  locally. Condition (b) takes into account 
invariance with respect to the group action, by specifying how local vectors along  orbits must coincide. 
The $(\Omega,G)$-rank expresses the minimal number of local vectors needed to express $v$. 

 (ii) We adopt the convention to set ${\rm rank}_{(\Omega,G)}(v)=\infty$ if $v$ does not admit an $(\Omega,G)$-decomposition.
\demo\end{remark}

\begin{example}\label{ex:ogd}
(i)  Consider the  $n$-simplex $\Sigma_n$, for $n\geq 1$. Since 
$$\widetilde{\mathcal{F}}_i=\widetilde{\mathcal F}=\{\{0,1, \ldots, n\}\}$$ 
is a singleton for each $i\in[n]$, a $\Sigma_n$-decomposition only uses one index and is thus of the form $$v=\sum_{\alpha=1}^r v_\alpha^{[0]}\otimes\cdots\otimes v_\alpha^{[n]}.$$  Thus ${\rm rank}_{\Sigma_n}(v)$ is the smallest number of elementary tensors needed to obtain $v$ as their sum, which is also known as the \emph{tensor rank} of $v$.

Now assume that the action of a group $G$ on $[n]$ is transitive, i.e.\ there is only one orbit. Then condition (b) requires that the  $(\Sigma_n,G)$-decompo\-sition  of $v$ uses the same local vectors everywhere. It is thus of the form $$\sum_{\alpha=1}^r v_\alpha\otimes\cdots\otimes v_\alpha,$$ which is also known as a \emph{symmetric tensor decomposition}, and   ${\rm rank}_{(\Sigma_n,G)}(v)$ is also called the \emph{symmetric tensor rank} of $v$ (see for example \cite{Co08c}). We will come back to this point in \cref{rem:sym}.

(ii) Consider the complete graph $\mathcal K_3$. A $\mathcal K_3$-decomposition of $v$ is of the form  $$v=\sum_{i,j,k,l,m,n=1}^r v_{ijk}^{[0]}\otimes v_{ilm}^{[1]}\otimes v_{jln}^{[2]}\otimes v_{kmn}^{[3]}.$$ If $S_{[3]}$ 
is the full permutation group acting on $\{0,1,2,3\}$, a $(\mathcal K_3,S_{[3]})$-decomposition of $v$ is $$ v=\sum_{i,j,k,l,m,n=1}^r v_{ijk}\otimes v_{ilm}\otimes v_{jln}\otimes v_{kmn},$$ with the additional property that the local tensor $$\left(v_{ijk}\right)_{i,j,k=1}^r$$ is fully symmetric.  
Note that this decomposition reveals the same symmetry as (i), namely  invariance under the full symmetry group $S_{[n]}$, but 
the simplicial complex is different. 
This illustrates how in an $(\Omega,G)$-decomposition both elements are important: the simplicial complex  and the group action. 

(iii) For $n\geq 1$ consider $\Lambda_n$, the  line of length $n$, where a $\Lambda_n$-decomposition of $v$ has the form $$v=\sum_{\alpha_0,\ldots, \alpha_{n-1}=1}^r v_{\alpha_0}^{[0]}\otimes v_{\alpha_0,\alpha_1}^{[1]}\otimes\cdots\otimes v_{\alpha_{n-2},\alpha_{n-1}}^{[n-1]}\otimes v_{\alpha_{n-1}}^{[n]}.$$ This is also called a \emph{matrix product operator form} of $v$, and  the $\Lambda_n$-rank is also called the \emph{operator Schmidt rank}---see for example \cite{De19} and references therein. 

For $n=2$, with action of $C_2$ (acting as $0\mapsto 2$,  $1\mapsto 1$,  $2\mapsto 0$),
 a $(\Lambda_2,C_2)$-decomposition is  $$v=\sum_{\alpha,\beta=1}^r v_\alpha\otimes w_{\alpha,\beta}\otimes v_\beta$$ with the additional property that $w_{\alpha,\beta}=w_{\beta,\alpha}$ for all $\alpha,\beta$ (i.e.\ the local tensor at site $1$ is symmetric).

(iv) For $n\geq 3$ consider the  circle $\Theta_n$ of length $n$. A $\Theta_n$-decompo\-sition has the form $$v=\sum_{\alpha_0,\ldots, \alpha_{n-1}=1}^r v_{\alpha_{0},\alpha_1}^{[0]}\otimes v_{\alpha_1,\alpha_2}^{[1]}\otimes\cdots\otimes v_{\alpha_{n-1},\alpha_{0}}^{[n-1]}.$$ This is almost the same as the decomposition from (iii), but with closed (i.e.\ periodic) boundary conditions.
Now let the cyclic group $C_n$ act on $\Theta_n$. A $(\Theta_n,C_n)$-decomposition then is  $$v=\sum_{\alpha_0,\ldots, \alpha_{n-1}=1}^r v_{\alpha_{0},\alpha_1}\otimes v_{\alpha_1,\alpha_2}\otimes\cdots\otimes v_{\alpha_{n-1},\alpha_{0}},$$ which is called \emph{translational invariant matrix product operator form} in \cite{De19}, and the corresponding rank is called the \emph{t.i.\ operator Schmidt rank}.

(v) On the simple edge $\Sigma_1=\mathcal K_1=\Lambda_1$, the decomposition is $$v=\sum_{\alpha=1}^r v_\alpha^{[0]}\otimes v_\alpha^{[1]}$$ and the $C_2$-invariant decomposition  is  $$v=\sum_{\alpha=1}^r v_\alpha\otimes v_\alpha.$$ On the double edge $\Delta$,  with free action as in \cref{ex:wsc} (v), the corresponding decompositions are $$v=\sum_{\alpha,\beta=1}^r v_{\alpha,\beta}^{[0]}\otimes v_{\beta,\alpha}^{[1]}$$ and   $$v=\sum_{\alpha,\beta=1}^r v_{\alpha,\beta}\otimes v_{\beta,\alpha}.$$ The difference between the simple edge and the double edge has been observed and examined in \cite{De19}, but without developing the theoretical  foundations, as we do here.
\demo\end{example}

Our first result on the existence of decompositions does not involve a group action yet:

\begin{theorem}\label{thm:nogroup}  For  every connected wsc $\Omega$  and every $v\in\mathcal V$, we have ${\rm rank}_\Omega(v)<\infty.$  
\end{theorem}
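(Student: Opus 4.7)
The plan is to exhibit, for any $v\in\mathcal V$, an explicit $\Omega$-decomposition whose index set has size at most the ordinary tensor rank of $v$. By the very definition of the algebraic tensor product, write
$$v=\sum_{k=1}^{r} v^{[0]}_k\otimes v^{[1]}_k\otimes\cdots\otimes v^{[n]}_k$$
and take $\mathcal I=\{1,\ldots,r\}$. The construction I would use is the ``diagonal'' one: at each site $i\in[n]$ and for every $\beta\in\mathcal I^{\widetilde{\mathcal F}_i}$, set
$$v^{[i]}_\beta := \begin{cases} v^{[i]}_k & \text{if } \beta \text{ is the constant function with value } k, \\ 0 & \text{otherwise.} \end{cases}$$

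The remaining step is to verify that $\sum_{\alpha\in\mathcal I^{\widetilde{\mathcal F}}} v^{[0]}_{\alpha|_0}\otimes\cdots\otimes v^{[n]}_{\alpha|_n}$ recovers $v$. A summand is nonzero only when each restriction $\alpha|_i$ is constant; denote that constant value by $k_i\in\mathcal I$. Whenever two vertices $i,j$ are neighbors, the intersection $\widetilde{\mathcal F}_i\cap\widetilde{\mathcal F}_j$ contains a common multi-facet $\widetilde F$, on which $\alpha|_i$ and $\alpha|_j$ both take the value $\alpha(\widetilde F)$, forcing $k_i=k_j$. This is where connectedness of $\Omega$ enters: propagating the equalities along chains of neighbors, all the $k_i$ must coincide with a single $k\in\mathcal I$, and hence $\alpha$ itself is globally constant. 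The nonzero contributions to the sum are therefore parametrised by $k\in\mathcal I$, and they reproduce the original decomposition exactly, yielding $\omr(v)\le r<\infty$.

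There is no serious obstacle here beyond the connectedness bookkeeping. One just has to note that every facet is nonempty (since singletons are simplices and facets are maximal), so every element of $\widetilde{\mathcal F}$ lies in some $\widetilde{\mathcal F}_i$ and no coordinate of $\alpha$ escapes the constancy constraints above; after that, the chain-of-neighbors argument given by connectedness of $\Omega$ does all the work. As a side product, the proof provides the natural inequality $\omr(v)\le \rank_{\Sigma_n}(v)$ for every connected $\Omega$ on $[n]$, which is to be expected since an $\Omega$-decomposition is a priori more constrained than an unconstrained tensor decomposition.
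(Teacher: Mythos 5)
Your proof is correct and follows essentially the same route as the paper: start from a finite decomposition into elementary tensors, define the local vectors to be nonzero only on constant index functions, and use connectedness of $\Omega$ to conclude that the only surviving terms are those with a globally constant $\alpha$. The extra bookkeeping you spell out (propagating constancy along neighbors, and the observation $\omr(v)\le {\rm rank}_{\Sigma_n}(v)$) matches what the paper states in its proof and in \cref{ex:ineqtr}.
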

\begin{proof}We start with a decomposition $$v=\sum_{j\in\mathcal I} w_j^{[0]}\otimes \cdots \otimes w_j^{[n]}$$ of $v$ as a finite sum of elementary tensors. 
For $i\in[n]$ and  $\beta\in{\mathcal I}^{\widetilde{\mathcal F}_i}$ we then define 
$$v^{[i]}_{\beta}:= \left\{ \begin{array}{ll}w_{j}^{[i]}&\colon\beta \mbox{ takes the constant value }  j\in\mathcal I \\  0&\colon\mbox{else.}\end{array}\right.$$
Since $\Omega$ is connected, for $\alpha\in\mathcal I^{\widetilde{\mathcal F}}$ the functions $\alpha_{\mid_i}$ are  constant only if $\alpha$ is constant. This implies $$\sum_{\alpha\in\mathcal I^{\widetilde{\mathcal F}}} v_{\alpha_{\mid_0}}^{[0]}\otimes\cdots\otimes v_{\alpha_{\mid_n}}^{[n]}=\sum_{j\in\mathcal I} w_j^{[0]}\otimes\cdots\otimes w_j^{[n]}=v,$$ which proves the claim.
\end{proof}

\begin{remark}\label{rem:sym}
Clearly not every $v\in\mathcal V$ 
admits an $(\Omega,G)$-decomposition, since such a decomposition for example requires $G$-invariance: 
\begin{align*}g\cdot v&=\sum_{\alpha\in\mathcal I^{\widetilde{\mathcal F}}}v^{[g0]}_{\alpha_{\mid g0}}\otimes \cdots\otimes v^{[gn]}_{\alpha_{\mid gn}} \\
&=\sum_{\alpha\in\mathcal I^{\widetilde{\mathcal F}}}v^{[g0]}_{{}^g((^{g^{-1}}\alpha)_{\mid 0})}\otimes \cdots\otimes v^{[gn]}_{{}^g((^{g^{-1}}\alpha)_{\mid n})}
\\
&=\sum_{\alpha\in\mathcal I^{\widetilde{\mathcal F}}}v^{[0]}_{({}^{g^{-1}}\alpha)_{\mid 0}}\otimes \cdots\otimes v^{[n]}_{({}^{g^{-1}}\alpha)_{\mid n}}\\ &=\sum_{\alpha\in\mathcal I^{\widetilde{\mathcal F}}}v^{[0]}_{\alpha_{\mid 0}}\otimes \cdots\otimes v^{[n]}_{\alpha_{\mid n}} =v.
\end{align*} For the third equation we have used condition (b) from \cref{def:ogd}, and for the fourth that ${}^{g^{-1}}\alpha$ runs through $\mathcal I^{\widetilde{\mathcal F}}$ if $\alpha$ does. 

However, an $(\Omega,G)$-decomposition might imply an even stronger symmetry than  $G$-invariance of $v$. In \cref{ex:ogd} (i) we have seen that all transitive group actions on the $n$-simplex lead to the same $(\Sigma_n,G)$-decomposition, which is the fully symmetric decomposition. So if, for example, $v$ is only invariant under the cyclic group $C_n$, it cannot have  a $(\Sigma_n,C_n)$-decomposition. One way around this problem would be to use the multiset $\widetilde{\mathcal S}$ of \emph{all} simplices instead of $\widetilde{\mathcal F}$ in all definitions, since the action of $G$ on $\widetilde{\mathcal S}$ determines the action on $[n]$. However, this would not allow us to cover the fully symmetric tensor decomposition from \cref{ex:ogd} (i)  anymore. 

Another way around this problem, which is the one we have chosen here, is to raise the weights of the facets so that the action becomes free (\cref{prop:extend}). 
Freeness suffices to prove that every invariant element has an invariant decomposition, as we will see in \cref{thm:ogd}.
\demo\end{remark}

The following  is our main result on the existence of invariant tensor  decompositions. In combination with \cref{prop:extend} it shows that a $G$-invariant decomposition exists for every invariant tensor, after possibly enriching the underlying topological structure (or, in a less complicated formulation, by using more indices at each site).

\begin{theorem}[Main result]\label{thm:ogd}
 Let the action of $G$ on the connected wsc $\Omega$ be free. Then for every $v\in\mathcal V_{\rm inv}$ we have $\omgr(v)<\infty.$
Moreover,  for  every decomposition  $$v=\sum_{j} w_j^{[0]}\otimes\cdots\otimes w_j^{[n]}$$ of $v$ as a sum of elementary tensors, there is  an $(\Omega,G)$-decomposition of $v,$  which uses only nonnegative multiples of the $w_j^{[i]}$ as its local vectors.
\end{theorem}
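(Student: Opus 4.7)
The plan is to symmetrise any given ordinary decomposition of $v$ by using the free $G$-action on $\widetilde{\mathcal F}$ as an indexing device. By \cref{rem:grac}(vii), freeness yields a $G$-linear map $\mathbf z\colon \widetilde{\mathcal F}\to G$, where $G$ acts on itself by left multiplication. Starting from the given decomposition $v=\sum_{j\in\mathcal J}w_j^{[0]}\otimes\cdots\otimes w_j^{[n]}$, I would enlarge the index set to $\mathcal I:=\mathcal J\times G$ and, for each $(j,h)\in\mathcal I$, introduce the ``diagonal'' function $\alpha_{j,h}\in\mathcal I^{\widetilde{\mathcal F}}$ defined by
$$\alpha_{j,h}(\widetilde F):=\bigl(j,\,h\,\mathbf z(\widetilde F)\bigr).$$
Intuitively, only these $\alpha$'s will carry nonzero local vectors.

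Fixing a nonnegative scalar $\mu$ with $\mu^{n+1}=|G|^{-1}$, I then set, for each $i\in[n]$ and $\beta\in\mathcal I^{\widetilde{\mathcal F}_i}$,
$$v^{[i]}_\beta:=\begin{cases}\mu\,w_j^{[hi]}&\text{if }\beta=\alpha_{j,h}\big|_i\text{ for some }(j,h)\in\mathcal J\times G,\\ 0&\text{otherwise.}\end{cases}$$
Well-definedness is automatic because $\widetilde{\mathcal F}_i\neq\emptyset$ and the pair $(j,\,h\mathbf z(\widetilde F))$ recovers $j$ from the first coordinate and $h$ from the second (right-multiplying by $\mathbf z(\widetilde F)^{-1}$). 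Each local vector is manifestly a nonnegative multiple of one of the original $w_j^{[i']}$, as required.

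For condition (b) of \cref{def:ogd}, a direct computation using the $G$-linearity of $\mathbf z$ gives ${}^g(\alpha_{j,h}|_i)=\alpha_{j,\,hg^{-1}}\big|_{gi}$, so that both $v^{[i]}_{\alpha_{j,h}|_i}$ and $v^{[gi]}_{{}^g(\alpha_{j,h}|_i)}$ simplify to $\mu\,w_j^{[hi]}$. For condition (a), note that a summand indexed by $\alpha$ is nonzero only if each restriction $\alpha|_i$ coincides with $\alpha_{j_i,h_i}|_i$ for some $(j_i,h_i)$; for any $\widetilde F\in\widetilde{\mathcal F}_i\cap\widetilde{\mathcal F}_{i'}$, comparing $\alpha(\widetilde F)$ from both sides forces $(j_i,h_i)=(j_{i'},h_{i'})$. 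Connectedness of $\Omega$ then propagates this equality throughout $[n]$, so $\alpha=\alpha_{j,h}$ for a single pair $(j,h)$, and the corresponding summand equals $\mu^{n+1}\,h\cdot(w_j^{[0]}\otimes\cdots\otimes w_j^{[n]})$. Summing over $(j,h)\in\mathcal J\times G$ and invoking $G$-invariance of $v$ yields $\mu^{n+1}\sum_{h\in G}h\cdot v=\mu^{n+1}|G|\,v=v$.

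The main technical point is packaging the freeness in a way that condition (b) holds while still leaving enough flexibility for condition (a); the map $\mathbf z$ accomplishes both, encoding in each index both the original summand $j$ and a tag $h\mathbf z(\widetilde F)$ that transforms equivariantly under $G$. Connectedness then forces the ``diagonal'' structure of the nonzero $\alpha$'s and is used essentially in exactly the same way as in \cref{thm:nogroup}. Since $|\mathcal I|=|\mathcal J|\cdot|G|<\infty$, the construction certifies $\omgr(v)<\infty$.
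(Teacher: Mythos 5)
Your proposal is correct and follows essentially the same route as the paper: both exploit freeness only through the $G$-linear map $\mathbf z\colon\widetilde{\mathcal F}\to G$ of \cref{rem:grac}(vii), enlarge the index set by a factor of $G$, support the local vectors on ``diagonal'' index functions, and use connectedness plus $G$-invariance to collapse the sum to a positive multiple of $v$. The only cosmetic differences are that you fold the intermediate $\Omega$-decomposition of \cref{thm:nogroup} directly into the construction and fix the normalising scalar $\mu$ up front rather than absorbing it at the end.
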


Note the theorem does not say anything about the value of $\omgr(v)$. 
It says that, provided the action of $G$ on $\Omega$ is free, 
an \ogd{}  exists, and the proof provides a (generally non-optimal) way to obtain it. 

\begin{proof} Using  \cref{rem:grac} (vii), we fix a $G$-linear map ${\bf z}\colon \widetilde{\mathcal F} \to G$. For $v\in\mathcal V_{\rm inv}$ we first choose  an $\Omega$-decomposition, whose existence we have proven in \cref{thm:nogroup} (we can choose the local vectors from any initial tensor decomposition, as it  is clear from the proof). The local tensors from the $\Omega$-decomposition  are denoted $$W^{[i]}=\left( w_\beta^{[i]}\right)_{\beta\in\mathcal I^{\widetilde{\mathcal F}_i}}$$ for $i\in[n]$. We define the new index set $$\widetilde{\mathcal I}:= \mathcal I \times G$$ and consider the projection maps $$p_1\colon\widetilde{\mathcal I}\to \mathcal I,\quad p_2\colon\widetilde{\mathcal I}\to G.$$ For each $i\in[n]$ and   $\beta\in \widetilde{\mathcal I}^{\widetilde{\mathcal F}_i}$  we now  define 
  $$v^{[i]}_\beta:= \left\{ \begin{array}{ll} w_{{}^g(p_1\circ \beta)}^{[gi]} &:  p_2\circ\beta = ({}^{g^{-1}}\mathbf z)_{\mid_i}   
  \\ 0 &:\mbox{else.}\end{array}\right.$$ Note that if such a $g$ exists for $p_2\circ\beta$, it is uniquely determined, since ${\bf z}$ is $G$-linear and the action of $G$ on itself by left-multiplication is free. Also note that the new  local vectors are all among the inital vectors. 
  
  The arising local tensors now fulfill (b) from \cref{def:ogd} (i), as one easily checks.
  We now compute
 \begin{align*}
  &\sum_{\tilde\alpha\in\widetilde{\mathcal I}^{\widetilde{\mathcal F}}} v_{\tilde\alpha_{\mid_0}}^{[0]} \otimes\cdots\otimes  v_{\tilde\alpha_{\mid_n}}^{[n]}\\
=& \sum_{\small\begin{array}{c}z\in G^{\widetilde{\mathcal F}}\\ \forall i  \exists g_i\colon z_{\mid_i}=({}^{g_i^{-1}}\mathbf z)_{\mid_i} \end{array}}  \sum_{\alpha\in\mathcal I^{\widetilde{\mathcal F}}}  w_{{}^{g_0}(\alpha_{\mid_0})}^{\left[g_00\right]} \otimes\cdots\otimes w_{{}^{g_n}(\alpha_{\mid_n})}^{\left[g_nn\right]}.
 \end{align*}
 Since $\Omega$ is connected, $\mathbf z$ is $G$-linear,  and the action of $G$ on itself is free, we immediately obtain $g_i=g_j=:g$ for all $i,j$, if $z$ fulfills the above conditions. 
  So for each fixed $z$, the sum simplifies to $$\sum_{\alpha\in\mathcal I^{\widetilde{\mathcal F}}}w_{\alpha_{\mid_{g0}}}^{\left[g0\right]} \otimes\cdots\otimes w_{\alpha_{\mid_{gn}}}^{\left[gn\right]}$$ for some $g$ depending on $z$.  But this is just $g\cdot v$, and since $v$ is $G$-invariant it is in fact $v$. So the total sum yields a positive multiple of $v$ (the sum is not empty, since at least $z=\mathbf z$ fulfills the conditions). Since a positive scaling factor can be absorbed into the local vectors, this proves the claim. 
\end{proof}

\begin{remark}(i) 
Freeness of the action of $G$ on $\Omega$  in \cref{thm:ogd} is necessary to obtain an $(\Omega,G)$-decomposition for every invariant vector. 
For example, if a group action on the  $n$-simplex is transitive on $[n]$, an $(\Omega,G)$-decomposition requires full symmetry, which is stronger than $G$-invariance in general. However, such an action is never free.

(ii) Even if an $(\Omega,G)$-decomposition exists for all invariant vectors, in general the local vectors cannot be chosen from  any  initial  tensor decomposition. This also requires freeness of the action, that is, freeness is also necessary for the second statement of  \cref{thm:ogd}. 
One example is the simple edge with action from $C_2$, 
for which an \ogd{} exists for each invariant vector (by \cref{thm:genex} below), 
but one cannot choose the local vectors from any initial tensor decomposition. This is only possible on the double edge, where the action is indeed free. This will be studied in \cref{ssec:sep}. 
\demo\end{remark}

\begin{example}(i) The cyclic action of $C_n$ on the circle $\Theta_n$ is free, so every invariant vector admits a $(\Theta_n,C_n)$-decomposition, or, in the words of \cite{De19}, a translational invariant matrix product operator form.

(ii) More generally, whenever $G$ is a finite group with generating set $S$ as in \cref{ex:wsc} (vi), the action of $G$ on  $\mathcal C(G,S)$ is free, and the invariant decomposition thus exists for every invariant vector.
\demo\end{example}

We will prove another existence result for decompositions below, for which we need the following basic inequalities:
\begin{proposition}\label{prop:subadd}
Let $G$ act on the connected wsc $\Omega$. Then for all $v,w\in\mathcal V$  the following is true:

(i)  $\omgr(v+w)\leq\omgr(v)+\omgr(w).$

(ii) If all $\mathcal V_i$ are algebras, then  $\omgr(vw)\leq\omgr(v)\: \omgr(w).$
\end{proposition}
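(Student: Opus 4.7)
The plan is to build explicit $(\Omega,G)$-decompositions of $v+w$ and $vw$ from given $(\Omega,G)$-decompositions of $v$ and $w$ with (optimal) index sets $\mathcal I$ and $\mathcal J$ and local tensors $V^{[i]}, W^{[i]}$. The two constructions will use $\mathcal I \sqcup \mathcal J$ and $\mathcal I \times \mathcal J$ as the new index sets, which yields the cardinality bounds $|\mathcal I|+|\mathcal J|$ and $|\mathcal I|\cdot|\mathcal J|$ directly.

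For (i), on $\widetilde{\mathcal I} := \mathcal I \sqcup \mathcal J$ I define
\[
u^{[i]}_\beta := \begin{cases} v^{[i]}_\beta & \text{if the image of } \beta \text{ lies in } \mathcal I, \\ w^{[i]}_\beta & \text{if the image of } \beta \text{ lies in } \mathcal J, \\ 0 & \text{otherwise.} \end{cases}
\]
In the resulting sum over $\alpha \in \widetilde{\mathcal I}^{\widetilde{\mathcal F}}$, a term is non-zero only when each restriction $\alpha_{\mid_i}$ is supported in one of the two original index sets. The main obstacle, and the only place where connectedness of $\Omega$ enters, is to rule out ``mixed'' $\alpha$ for which $\alpha_{\mid_i}$ lies in $\mathcal I$ at some vertex $i$ while $\alpha_{\mid_j}$ lies in $\mathcal J$ at another vertex $j$. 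Along a neighbor-path from $i$ to $j$, there must be consecutive vertices in opposite camps, and any facet in the intersection $\widetilde{\mathcal F}_{i_k}\cap\widetilde{\mathcal F}_{i_{k+1}}$ (which is non-empty by definition of neighbors) would then be forced to take a value in $\mathcal I\cap\mathcal J = \emptyset$, a contradiction. Once this is ruled out, the non-zero contributions split cleanly into the $\mathcal I$-indexed and $\mathcal J$-indexed subsums, giving $v+w$. Condition (b) of \cref{def:ogd} transfers because ${}^g\beta$ has the same image as $\beta$, so it lies in the same camp, and the required equality reduces to (b) for the original decomposition (or to $0=0$ in the mixed case).

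For (ii), on $\widetilde{\mathcal I} := \mathcal I \times \mathcal J$ every $\beta \in \widetilde{\mathcal I}^{\widetilde{\mathcal F}_i}$ splits uniquely via the projections as $\beta = (\beta_1,\beta_2)$ with $\beta_1 \in \mathcal I^{\widetilde{\mathcal F}_i}$ and $\beta_2 \in \mathcal J^{\widetilde{\mathcal F}_i}$, and I set $u^{[i]}_\beta := v^{[i]}_{\beta_1}\cdot w^{[i]}_{\beta_2}$, using the algebra product in $\mathcal V_i$. The identity $(a_0 b_0)\otimes\cdots\otimes(a_n b_n) = (a_0\otimes\cdots\otimes a_n)(b_0\otimes\cdots\otimes b_n)$ in the tensor product of algebras lets the sum over $\alpha = (\alpha_1,\alpha_2) \in \widetilde{\mathcal I}^{\widetilde{\mathcal F}}$ factor as the product of the two original sums, which equals $vw$. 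Invariance is inherited factor-wise, since ${}^g\beta = ({}^g\beta_1,{}^g\beta_2)$ and both $V^{[i]}$ and $W^{[i]}$ satisfy condition (b). Neither connectedness nor freeness is needed for this part.
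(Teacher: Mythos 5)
Your proposal is correct and takes essentially the same route as the paper: the direct-sum construction on $\mathcal I\sqcup\mathcal J$ with zero local vectors on mixed indices for (i), and the entrywise algebra product of local tensors indexed by $\mathcal I\times\mathcal J$ for (ii). The only difference is that you spell out the connectedness argument (ruling out non-zero cross terms via a shared facet along a neighbor path) that the paper leaves as an immediate check, and that elaboration is accurate.
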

\begin{proof} Both statements are clearly true if either $v$ or $w$ does not admit an $(\Omega,G)$-decomposition.
So let $$ V^{[i]}=\left(v_{\beta}^{[i]}\right)_{\beta\in\mathcal I^{\widetilde{\mathcal F}_i}},\quad W^{[i]}=\left(w_{\beta}^{[i]}\right)_{\beta\in\mathcal J^{\widetilde{\mathcal F}_i}}$$ be the  local tensors from $(\Omega,G)$-decompositions of $v$ and $w$. 

For (i) we take the direct sum of the local tensors to obtain an \ogd{} for $v+w$. In detail, we define the new index set $\mathcal L:=\mathcal I \sqcup \mathcal J$ as the disjoint union of $\mathcal I$ and $\mathcal J$, set $$x_\beta^{[i]}:=\left\{ \begin{array}{ll} v_{\beta}^{[i]}&\colon\beta \mbox{ takes values only in } \mathcal I\\   w_{\beta}^{[i]}&\colon\beta \mbox{ takes values only in }\mathcal J\\ 0 &\colon\mbox{else} \end{array}\right.$$ for $\beta\in\mathcal L^{\widetilde{\mathcal F}_i}$, and obtain new local tensors $$X^{[i]}:=V^{[i]}\oplus W^{[i]}:=\left( x_\beta^{[i]} \right)_{\beta\in\mathcal L^{\widetilde{\mathcal F}_i}}.$$ Then condition (b) from \cref{def:ogd} is clearly fulfilled, and using connectedness of $\Omega$ one immediately checks $$\sum_{\alpha\in\mathcal L^{\widetilde{\mathcal F}}}x^{[0]}_{\alpha_{\mid 0}}\otimes \cdots\otimes x^{[n]}_{\alpha_{\mid n}}=v+w.$$
Since $\vert\mathcal L\vert=\vert\mathcal I\vert +\vert\mathcal J\vert,$ the statement is proven.

For (ii) we take the tensor product of the local tensors. In detail, consider the new index set $\mathcal L:=\mathcal I\times\mathcal J$ with the two projections $p_1\colon\mathcal L\to\mathcal I, p_2\colon\mathcal L\to \mathcal J,$ and define 
$$x_\beta^{[i]}:= v_{p_1\circ\beta}^{[i]}w_{p_2\circ\beta}^{[i]}$$ for $\beta\in\mathcal L^{\widetilde{\mathcal F}_i}$. The new local tensors $$X^{[i]}:=V^{[i]}\otimes W^{[i]}:=\left(x^{[i]}_\beta\right)_{\beta\in\mathcal L^{\widetilde{\mathcal F}_i}}$$ then provide an \ogd{} for $vw$, with $\vert\mathcal L\vert=\vert\mathcal I\vert\cdot\vert\mathcal J\vert$.
\end{proof}

The following is our second result on the existence of invariant decompositions. It is a consequence of the symmetric decomposition of symmetric tensors in finite dimension \cite{Co08c}.
 
\begin{theorem}\label{thm:genex}
Let the action of $G$ on the connected wsc $\Omega$ be blending. Then for every $v\in\mathcal V_{\rm inv}$ we have $\omgr(v)<\infty.$
\end{theorem}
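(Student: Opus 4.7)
The plan is to combine the diagonal construction from the proof of \cref{thm:nogroup} with the classical symmetric tensor decomposition of Comon et al.\ \cite{Co08c}, using the blending hypothesis to match the symmetries.

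First I would apply \cref{thm:nogroup} to obtain an $\Omega$-decomposition of $v$, and let $W_i\subseteq\mathcal V_i$ be the finite-dimensional span of the local vectors at site $i$. Since vertices in the same $G$-orbit $O\subseteq[n]$ share a common local space, I enlarge these orbit-wise by setting $W_O:=\sum_{i\in O}W_i$ and redefining $W_i:=W_O$ for every $i\in O$. The subspace $W:=W_0\otimes\cdots\otimes W_n$ is then finite-dimensional, contains $v$, and is $G$-invariant (each tensor factor $W_i$ is mapped to the factor in slot $gi$, where the subspace $W_{O(gi)}=W_{O(i)}$ coincides with it).

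Next, I would exploit blending to identify the image of $G$ in $S_{n+1}$ under the action on $[n]$ with the product $\prod_{O}S_O$ of symmetric groups over the vertex orbits: the inclusion $\subseteq$ is clear, and for $\supseteq$ any $\sigma\in\prod_O S_O$ can be written as $\sigma(i)=g_i\, i$ with $g_i\in G$, so blending yields a single $g\in G$ with $g\,i=\sigma(i)$ for all $i$. Hence, inside the finite-dimensional space $W$, the element $v$ is $G$-invariant if and only if it is partially symmetric, i.e.\ $v\in\bigotimes_{O}\mathrm{Sym}^{|O|}(W_O)$. Iterating Comon's theorem over the orbits then produces a decomposition
$$v=\sum_{\alpha=1}^{r}\bigotimes_{O}\left(w_\alpha^{O}\right)^{\otimes |O|}$$
with $w_\alpha^{O}\in W_O$.

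Finally I would lift this to an $(\Omega,G)$-decomposition with index set $\mathcal I=\{1,\ldots,r\}$ via the diagonal trick: for $\beta\in \mathcal I^{\widetilde{\mathcal F}_i}$, set $v_\beta^{[i]}:=w_\alpha^{O(i)}$ when $\beta$ is the constant function of value $\alpha$, and $v_\beta^{[i]}:=0$ otherwise, where $O(i)$ denotes the orbit of $i$. As in the proof of \cref{thm:nogroup}, connectedness of $\Omega$ forces the only $\alpha\in\mathcal I^{\widetilde{\mathcal F}}$ with every restriction $\alpha_{\mid i}$ constant to be globally constant, so condition (a) of \cref{def:ogd} reproduces the partially symmetric decomposition above. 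Condition (b) is immediate: the map $\beta\mapsto{}^g\beta$ sends constant functions to constant functions of the same value, and $O(i)=O(gi)$. The hard part is step two, where blending is precisely what forces the image of $G$ to be the full partial-symmetry group and thereby enables Comon's theorem; in contrast to \cref{thm:ogd}, no freeness of the action on $\widetilde{\mathcal F}$ is needed, because the diagonal construction trivialises the dependence on the $\widetilde{\mathcal F}$-indexing.
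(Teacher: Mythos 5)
Your proof is correct, but it takes a genuinely different route from the paper's. The paper starts from an arbitrary decomposition of $v$ into elementary tensors, fixes scalars $d_\ell^{[i]}$ giving a symmetric decomposition of the auxiliary indicator tensor in \eqref{eq:symtens} (this is where \cite{Co08c} enters), and defines, for each $\ell$, group-averaged local vectors $v^{[i]}_{\ell,\beta}=\sum_{g\in G}d_\ell^{[gi]}w_j^{[gi]}$ supported on constant $\beta$; each $\ell$ yields an \ogd{} of some $v_\ell$, and connectedness together with blending shows that $v_1+\cdots+v_r$ is a positive multiple of $\sum_{g\in G} g\cdot v$, hence of $v$, so subadditivity (\cref{prop:subadd}) gives the explicit bound $\omgr(v)\le r\,\vert\mathcal I\vert$. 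You instead use blending at the level of the permutation image: exactly as observed in \cref{rem:grac}\,(v), blending means the image of $G$ in $S_{n+1}$ is the full product $\prod_O S_O$ over vertex orbits, so $G$-invariance is orbit-wise full symmetry; you then restrict to a finite-dimensional $G$-invariant subspace $W$ spanned orbit-wise by the local vectors of an initial decomposition, apply Comon's lemma orbit-wise to $v$ itself, and lift via the constant-index construction of \cref{thm:nogroup}, with condition (b) holding because $\beta\mapsto{}^g\beta$ preserves constancy and its value and $O(gi)=O(i)$. All steps check out (the reduction of $\prod_O S_O$-invariance to membership in the tensor product of symmetric subspaces, and the absorption of scalars over $\C$, are standard), and your closing remark is accurate: no freeness and no particular refinement of the action on $\widetilde{\mathcal F}$ is used. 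As for what each approach buys: the paper's argument keeps the new local vectors as explicit group-averaged combinations of those of \emph{any} starting decomposition and yields the quantitative bound $r\cdot\vert\mathcal I\vert$ with $r=C(n)$, which is what gets reused for \cref{prop:inreg}; your argument is more conceptual, avoids the auxiliary tensor and the subadditivity step, and instead bounds $\omgr(v)$ by the orbit-wise symmetric rank of $v$ inside $W$, which is not directly comparable to the paper's bound but makes the role of blending (full orbit-wise symmetry) completely transparent.
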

\begin{proof}
We start with a decomposition $$v=\sum_{j\in\mathcal I} w_j^{[0]}\otimes \cdots \otimes w_j^{[n]}$$ of $v$ as a finite sum of elementary tensors. We then choose complex numbers $d_{\ell}^{[i]}$ for $i=0,\ldots, n$ and $\ell=1,\ldots, r$ (for some large enough  $r$), such that the following holds: 
\be\label{eq:symtens}
\sum_{\ell=1}^r d^{[i_0]}_\ell\cdots d^{[i_n]}_\ell=\left\{\begin{array}{cl}  1 \colon&  \{ i_0,\ldots, i_n\}=[n]\\ 0 \colon & \mbox{else.} \end{array}\right. 
\ee
 This is in fact just a symmetric tensor decomposition of the symmetric tensor defined by the right hand side, so the existence of such numbers follows from \cite[Lemma 4.2]{Co08c}.
For $i\in[n], \ell=1,\ldots, r$ and  $\beta\in{\mathcal I}^{\widetilde{\mathcal F}_i}$ we now define 
$$v^{[i]}_{\ell,\beta}:= \left\{ \begin{array}{ll}\sum_{g\in G} d_\ell^{[gi]}w_{j}^{[gi]}&\colon\beta \mbox{ takes the constant value }  j\in\mathcal I \\  0&\colon\mbox{else.}\end{array}\right.$$
 For each fixed $\ell$, these vectors fulfill  condition (b) of \cref{def:ogd} (i) and thus provide an \ogd{} of a certain element $v_\ell\in\mathcal V$. We now compute \begin{align*}v_1+\cdots + v_r&=\sum_{\ell=1}^r \sum_{\alpha\in\mathcal I^{\widetilde{\mathcal F}}} v_{\ell,\alpha_{\mid_0}}^{[0]}\otimes\cdots\otimes  v_{\ell,\alpha_{\mid_n}}^{[n]} \\ &=\sum_{g_0,\ldots, g_n\in G} \sum_{\ell=1}^r d_\ell^{[g_00]}\cdots d_\ell^{[g_nn]}\sum_{j\in\mathcal I}w_{j}^{[g_00]}\otimes\cdots\otimes w_{j}^{[g_nn]}.\end{align*} For the last equation we have again used that $\Omega$ is connected, so if $\alpha_{\mid_i}$ is constant for all $i$, then $\alpha$ is constant. By the choice of the $d_\ell^{[i]}$ and  the fact that the group action is blending, this simplifies further (where $\sim$ stands for ``some positive multiple of"): 
 \begin{align*} v_1+\cdots +v_r&= \sum_{\tiny\begin{array}{cc}g_0,\ldots, g_n\in G \\ \{ g_00,\ldots, g_nn\}=[n]\end{array}} \sum_{j\in\mathcal I}w_{j}^{[g_00]}\otimes\cdots\otimes w_{j}^{[g_nn]}\\ &\sim\sum_{g\in G}\sum_{j\in\mathcal I}w_{j}^{[g0]}\otimes\cdots\otimes w_{j}^{[gn]}\\ &= \sum_{g\in G} g \cdot v\\ &\sim v.
 \end{align*} 
 Now   \cref{prop:subadd} (i) implies  $\omgr(v)\leq r\vert\mathcal I\vert<\infty.$
\end{proof}

\begin{example}
Any fully symmetric tensor admits a symmetric tensor decomposition $$v=\sum_{\alpha=1}^r v_\alpha\otimes\cdots\otimes v_\alpha.$$ This is the statement of \cref{thm:genex} for the full symmetric group acting on the  $n$-simplex $\Sigma_n$. This is indeed not very surprising, since we have used the result for finite-dimensional local spaces in our proof. But  \cref{thm:genex} shows that the result also holds for infinite-dimensional spaces.

Another decomposition for fully symmetric tensors comes from the complete graph $\mathcal K_n$, as  illustrated  in \cref{ex:ogd} (ii) for $n=3$. This decomposition also exists for invariant vectors, but is obviously weaker than the one on the simplex (it can be constructed in an obvious way from the simplex-decomposition).
\demo\end{example}

\subsection{The invariant separable decomposition}
\label{ssec:sep}

Separability and its negation, entanglement, are 
central notions in quantum information theory.
We will now formulate and study separable invariant tensor decompositions in our framework. Throughout this section we thus assume that each local space  $$\mathcal V_i=\mathcal B(\mathcal H_i)$$ is the space of bounded operators on some 
(not necessarily finite dimensional)
Hilbert space $\mathcal H_i$, and again consider the global space $$\mathcal V=\mathcal V_0\otimes \cdots\otimes \mathcal V_n=\mathcal B(\mathcal H_0)\otimes \cdots\otimes\mathcal B(\mathcal H_n).$$

\begin{definition}(i) 
An element $\sigma\in\mathcal V$ is \emph{separable} if it admits a decomposition  $$\sigma=\sum_{j=1}^r \sigma_j^{[0]}\otimes \cdots\otimes \sigma_j^{[n]}$$ where all $\sigma_j^{[i]}\in \mathcal B(\mathcal H_i)$ are positive semidefinite (psd) operators.

(ii) For $\sigma\in\mathcal V$, a \emph{separable \ogd} is an \ogd $$\sigma=\sum_{\alpha\in\mathcal I^{\widetilde{\mathcal F}}}\sigma^{[0]}_{\alpha_{\mid 0}}\otimes\cdots\otimes \sigma^{[n]}_{\alpha_{\mid n}}$$ in which all  local operators $\sigma_\beta^{[i]}$ are psd. 

(iii) The smallest cardinality of the index set $\mathcal I$ among all separable \ogd{} of $\sigma$ is called the \emph{separable $(\Omega,G)$-rank}, denoted  $${\rm sep\mbox{-}rank}_{(\Omega,G)}(\sigma).$$

(iv) In case of the trivial group action, we call a separable \ogd{}  just \emph{separable $\Omega$-decomposition}, and write ${\rm sep\mbox{-}rank}_{\Omega}(\sigma)$ for the separable rank.
\end{definition}

\begin{remark} (i) A separable \ogd{} of $\sigma$ can clearly  exist only if $\sigma$ is separable and $G$-invariant.

(ii) In the case of the line $\Lambda_n$, a separable $\Lambda_n$-decomposition is called the \emph{separable decomposition} in \cite{De19}. 
For a circle $\Theta_n$ with cyclic group action $C_n$, a separable $(\Theta_n,C_n)$-decomposition is called a \emph{translational invariant separable decomposition} in \cite{De19}.
\demo\end{remark}

We now easily obtain our main result on the existence of separable $(\Omega,G)$-decompositions.

\begin{theorem}\label{thm:sep}
Let  the action of $G$  on the connected wsc $\Omega$ be free. Then for every separable $\sigma\in\mathcal V_{\rm inv}$ we have ${\rm sep\mbox{-}rank}_{(\Omega,G)}(\sigma)<\infty.$
\end{theorem}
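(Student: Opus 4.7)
The plan is to deduce this directly from \cref{thm:ogd} by feeding it a separable starting decomposition. Since $\sigma$ is assumed separable, we may write
$$\sigma=\sum_{j=1}^{r} \sigma_j^{[0]}\otimes\cdots\otimes \sigma_j^{[n]}$$
with all $\sigma_j^{[i]}\in\mathcal B(\mathcal H_i)$ positive semidefinite. This is an honest decomposition of $\sigma$ as a finite sum of elementary tensors, so it is eligible as the ``initial'' decomposition referred to in the second part of \cref{thm:ogd}.

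Next, I would invoke the ``moreover'' clause of \cref{thm:ogd}: since the action of $G$ on the connected wsc $\Omega$ is free and $\sigma$ is $G$-invariant, there exists an $(\Omega,G)$-decomposition of $\sigma$ whose local vectors are all nonnegative multiples of the $\sigma_j^{[i]}$. The psd cone in $\mathcal B(\mathcal H_i)$ is closed under multiplication by nonnegative scalars, so every local operator appearing in the resulting decomposition is again psd. By definition, this makes it a separable $(\Omega,G)$-decomposition of $\sigma$, and in particular $\mathrm{sep\text{-}rank}_{(\Omega,G)}(\sigma)<\infty$.

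There is essentially no new obstacle here beyond what was already overcome in the proof of \cref{thm:ogd}; the whole point is that the construction in that proof is built by taking nonnegative linear combinations of products of entries of the initial local tensors, so it automatically preserves positivity. The only thing one has to be slightly careful about is that the local operators in the refined decomposition arising in the proof of \cref{thm:ogd} are of the form $w^{[gi]}_{{}^g(p_1\circ\beta)}$ or $0$, i.e.\ literally among the initial $\sigma_j^{[i]}$ (possibly permuted by the action of $G$) or zero; since $\sigma$ is $G$-invariant and the spaces $\mathcal V_i=\mathcal V_{gi}$ along an orbit coincide, these are still psd operators in the correct spaces. Hence the produced $(\Omega,G)$-decomposition is separable, finishing the proof.
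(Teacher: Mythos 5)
Your proposal is correct and is exactly the paper's argument: the paper also proves this by applying \cref{thm:ogd} to a decomposition of $\sigma$ as a sum of psd elementary tensors, using that the local vectors in the resulting $(\Omega,G)$-decomposition are nonnegative multiples of the initial ones and hence still psd. Your write-up just spells out the details that the paper leaves implicit.
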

\begin{proof}
This is a direct consequence of \cref{thm:ogd}, when starting with a decomposition of $v$ as a sum of psd elementary tensors.
\end{proof}

\begin{example}
(i) For every wsc $\Omega,$ every separable $v\in\mathcal V$ has a separable $\Omega$-decompo\-sition. This is true since the action of the trivial group  is free.

(ii) On the circle $\Theta_n$ (for $n\geq 3$), every cyclically invariant and separable $\rho$  admits a separable $(\Theta_n,C_n)$-decomposition.

(iii) On the simple edge with action from $C_2$, not every separable $v\in \mathcal V_{\rm inv}$ admits a separable $(\Lambda_1,C_2$)-decomposition, which would be of the form $$\sigma=\sum_{\alpha=1}^r \sigma_\alpha\otimes\sigma_\alpha$$ with all $\sigma_\alpha$ psd. This was shown in \cite{De19},  using the fact that not every symmetric and entry-wise nonnegative matrix has a \emph{completely positive factorization} (this also follows from the results in \cref{sec:nn}). This shows that, even if an $(\Omega,G)$-decomposition exists for all invariant vectors, 
 the second statement of \cref{thm:ogd} (concerning how the local vectors can be chosen) needs freeness of the action. 
 On the double edge, the action is free and  the separable decomposition thus exists---namely, it is of the form \begin{equation}\sigma=\sum_{\alpha,\beta=1}^r \sigma_{\alpha,\beta}\otimes\sigma_{\beta,\alpha}.\tag*{}\end{equation}

(iv) More generally, the separable decomposition exists for each invariant separable tensor on the Cayley complex 
$\mathcal C(G,S)$ from \cref{ex:wsc} (vi).
\demo\end{example}

\begin{proposition}
Let $G$ act  on the connected wsc $\Omega$. Then for  $\rho,\sigma \in\mathcal V$ we have   $${\rm sep\mbox{-}rank}_{(\Omega,G)}(\rho+\sigma)\leq{\rm sep\mbox{-}rank}_{(\Omega,G)}(\rho)+{\rm sep\mbox{-}rank}_{(\Omega,G)}(\sigma).$$
\end{proposition}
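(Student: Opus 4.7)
The plan is to mimic the direct sum construction used in the proof of \cref{prop:subadd}(i) and observe that it preserves positive semidefiniteness of the local operators. If either $\rho$ or $\sigma$ has infinite separable $(\Omega,G)$-rank there is nothing to show, so assume both are finite. Fix separable $(\Omega,G)$-decompositions of $\rho$ and $\sigma$ with index sets $\mathcal I$ and $\mathcal J$ and local tensors
$$\Sigma^{[i]}=\left(\sigma^{[i]}_\beta\right)_{\beta\in\mathcal I^{\widetilde{\mathcal F}_i}},\qquad T^{[i]}=\left(\tau^{[i]}_\beta\right)_{\beta\in\mathcal J^{\widetilde{\mathcal F}_i}},$$
in which every $\sigma^{[i]}_\beta$ and $\tau^{[i]}_\beta$ is psd.

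Then I would set $\mathcal L:=\mathcal I\sqcup \mathcal J$ and define, for every $i\in[n]$ and $\beta\in\mathcal L^{\widetilde{\mathcal F}_i}$,
$$x_{\beta}^{[i]}:=\begin{cases} \sigma_{\beta}^{[i]} & \text{if $\beta$ takes values only in }\mathcal I,\\ \tau_{\beta}^{[i]} & \text{if $\beta$ takes values only in }\mathcal J,\\ 0 & \text{otherwise.}\end{cases}$$
Exactly as in the proof of \cref{prop:subadd}(i), connectedness of $\Omega$ ensures that mixed indices contribute trivially and the sum of elementary tensors equals $\rho+\sigma$; the $G$-invariance condition (b) of \cref{def:ogd} transfers from the two input decompositions to the direct sum because the group action only permutes indices within $\widetilde{\mathcal F}_i$ and the property of taking values only in $\mathcal I$ (respectively $\mathcal J$) is preserved under any such permutation.

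The only new point compared with \cref{prop:subadd}(i) is that each local operator $x_\beta^{[i]}$ is psd: this is immediate, since $\sigma_\beta^{[i]}$ and $\tau_\beta^{[i]}$ are psd by assumption and the zero operator is psd. Hence the local tensors $X^{[i]}:=\Sigma^{[i]}\oplus T^{[i]}$ constitute a separable $(\Omega,G)$-decomposition of $\rho+\sigma$ with index set of cardinality $\vert\mathcal I\vert+\vert\mathcal J\vert$, yielding the claimed inequality. I do not expect any real obstacle here; the argument is essentially a one-line positivity check appended to a construction already carried out.
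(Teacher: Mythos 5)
Your proposal is correct and follows exactly the route the paper takes: the paper's proof simply says to repeat the direct-sum construction of \cref{prop:subadd}(i), which is what you do, with the (trivial but worth stating) observation that all local operators in the direct sum remain psd.
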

\begin{proof}Just follow the proof of \cref{prop:subadd} (i).
\end{proof}

\begin{remark}\label{rem:cones}
Instead of a separable $(\Omega, G)$-decomposition, one could define decompositions where the local vectors must be taken from certain specified cones in each space $\mathcal V_i$. The separable decomposition just corresponds to the case of the cone of psd operators. \cref{thm:ogd} ensures that such invariant decompositions exist for all invariant elements that admit a standard tensor decomposition with such vectors, since scaling with positive reals is compatible with  cones. In \cref{sec:nn}, when defining the nonnegative decomposition for finite-dimensional tensors, we will use make use of this fact.
\demo\end{remark}

\subsection{The invariant purification form}
\label{ssec:puri}

The separable \ogd, which reveals the positivity of the element, exists only for separable elements.
The purification form is another decomposition that reveals the positivity of the element, and which exists for all positive elements. 
We will now introduce an $(\Omega, G)$-purification form and study it in our framework.
As in the last section we assume that each local space  $$\mathcal V_i=\mathcal B(\mathcal H_i)$$ is the space of bounded operators on some 
Hilbert space $\mathcal H_i$. We again consider the global space $$\mathcal V=\mathcal V_0\otimes \cdots\otimes \mathcal V_n=\mathcal B(\mathcal H_0)\otimes \cdots\otimes\mathcal B(\mathcal H_n)\subseteq \mathcal B(\mathcal H_0\otimes \cdots \otimes\mathcal H_n).$$ With the inclusion on the right we can define what it means for an element from $\mathcal V$ to be psd.
We will denote the set of bounded linear operators from  $\mathcal H_i$ to $\mathcal H'_i$ by $\mathcal B(\mathcal H_i,\mathcal H'_i)$.

\begin{definition}\label{def:puri} 
(i) For $\sigma\in\mathcal V$ an \emph{$(\Omega,G)$-purification} is an element 
$$\xi\in \mathcal B(\mathcal H_0,\mathcal H'_0)\otimes\cdots\otimes \mathcal B(\mathcal H_n,\mathcal H'_n)$$ 
with $$\sigma=\xi^*\xi \ \mbox{ and }\ \omgr(\xi)<\infty, $$ 
where ${}^*$ indicates the adjoint. 
Here, the $\mathcal H'_i$ can be arbitrary Hilbert spaces.

(ii) The smallest $(\Omega,G)$-rank  among all $(\Omega,G)$-purifications of $v$  is called the \emph{$(\Omega,G)$-purification rank} of $\sigma$, denoted $${\rm puri\mbox{-}rank}_{(\Omega,G)}(\sigma).$$

(iii) In case of the trivial group action, we again just  say \emph{$\Omega$-purification} and  \emph{$\Omega$-purification rank}, denoted ${\rm puri\mbox{-}rank}_{\Omega}(\sigma).$
\demo\end{definition}

\begin{remark} An   $(\Omega,G)$-purification of  $\sigma$ can clearly  exist only if $\sigma$ is positive semidefinite and $G$-invariant. Positive semidefiniteness is obvious, since Hermitain squares are always psd. Invariance follows from the fact that the set of invariant elements is a $*$-subalgebra of $\mathcal V$.
\demo\end{remark}

We now easily obtain a result on the existence of invariant purifications in many cases:

\begin{theorem}\label{prop:puriex} Let $G$ act on the wsc $\Omega$, and  assume  $\omgr(\xi)<\infty$ holds for every $\xi\in\mathcal V_{\rm inv}$. Further assume that all $\mathcal H_i$ are finite-dimensional.  Then for every positive semidefinite   $\sigma\in\mathcal V_{\rm inv}$ we have ${\rm puri\mbox{-}rank}_{(\Omega,G)}(\sigma)<\infty.$
\end{theorem}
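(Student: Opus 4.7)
The plan is to take the canonical operator square root of $\sigma$ as the purification, show it lies in $\mathcal V_{\rm inv}$, and then invoke the hypothesis that every invariant element has finite $(\Omega,G)$-rank.

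More concretely, since all $\mathcal H_i$ are finite dimensional, the inclusion $\mathcal V = \mathcal B(\mathcal H_0)\otimes\cdots\otimes\mathcal B(\mathcal H_n)\subseteq\mathcal B(\mathcal H_0\otimes\cdots\otimes\mathcal H_n)$ is actually an equality of vector spaces. So any operator on $\bigotimes_i \mathcal H_i$ automatically sits in $\mathcal V$. For a positive semidefinite $\sigma\in\mathcal V_{\rm inv}$ I would define $\xi:=\sqrt{\sigma}$, the unique psd square root of $\sigma$ obtained from finite-dimensional functional calculus. Then $\xi\in\mathcal V$, $\xi=\xi^*$ and $\xi^*\xi=\sigma$, so with the choice $\mathcal H'_i:=\mathcal H_i$ the element $\xi$ is a candidate $(\Omega,G)$-purification in the sense of \cref{def:puri}.

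The next step is to verify that $\xi$ is $G$-invariant. The action of $G$ on $\mathcal V$ permutes the tensor factors, which on the level of $\mathcal B(\mathcal H_0\otimes\cdots\otimes\mathcal H_n)$ is implemented by conjugation $A\mapsto U_g A U_g^*$ with the permutation unitary $U_g$ on $\bigotimes_i\mathcal H_i$. Since $\sigma$ is $G$-invariant, $U_g$ commutes with $\sigma$, hence with every element of the (commutative) unital $*$-algebra generated by $\sigma$, and in particular with $\sqrt{\sigma}$. Therefore $U_g\xi U_g^*=\xi$, so $\xi\in\mathcal V_{\rm inv}$.

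Now the hypothesis applies: since $\xi\in\mathcal V_{\rm inv}$, we have $\omgr(\xi)<\infty$, so $\xi$ satisfies both clauses of \cref{def:puri}(i). Therefore ${\rm puri\mbox{-}rank}_{(\Omega,G)}(\sigma)\leq \omgr(\xi)<\infty$, which is what we wanted. The argument is essentially routine once one observes that in finite dimensions $\sqrt{\sigma}$ lives in the algebraic tensor product $\mathcal V$; the main (minor) subtlety is justifying that the $G$-action, being conjugation by a unitary, commutes with continuous functional calculus and in particular preserves the square root of an invariant psd element.
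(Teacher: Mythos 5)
Your proposal is correct and follows essentially the same route as the paper's own proof: take the unique positive semidefinite square root $\xi=\sqrt{\sigma}$, note that in finite dimensions it lies in $\mathcal V$, argue its $G$-invariance (the paper phrases this as $\xi$ being a polynomial in $\sigma$, you phrase it via conjugation by permutation unitaries and functional calculus, which amounts to the same thing), and then apply the hypothesis $\omgr(\xi)<\infty$ to conclude ${\rm puri\mbox{-}rank}_{(\Omega,G)}(\sigma)<\infty$. No gaps; the argument matches the paper's proof in substance.
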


Note that the assumption of the theorem is fulfilled if, for example, the action of $G$ is free (as in \cref{thm:ogd}) or blending (as in  \cref{thm:genex}).

\begin{proof}
If all $\mathcal H_i$ are finite dimensional, then $\mathcal V=\mathcal B(\mathcal H_0\otimes \cdots\otimes\mathcal H_n),$ and thus the positive semidefinite operator $\sigma\in\mathcal V$ admits a (unique) positive semidefinite square root $\xi\in\mathcal V$. This $\xi$  is in fact a polynomial expression in $\sigma$, and thus also $G$-invariant. 
 By assumption, $\xi$ admits an \ogd{} and is thus an $(\Omega,G)$-purification of $\sigma.$
\end{proof}

Note that the proof uses the square root of $\sigma$, which is a special case of a purification (see also \cite{De19}).

\begin{proposition}
Let $G$ act on the connected wsc $\Omega$. Then for $\rho,\sigma \in\mathcal V$ we have  $${\rm puri\mbox{-}rank}_{(\Omega,G)}(\rho+\sigma)\leq{\rm puri\mbox{-}rank}_{(\Omega,G)}(\rho)+{\rm puri\mbox{-}rank}_{(\Omega,G)}(\sigma).$$
\end{proposition}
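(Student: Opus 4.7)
The plan is to mimic the strategy of \cref{prop:subadd}(i), but one level of indirection deeper: given minimal $(\Omega,G)$-purifications $\xi$ of $\rho$ and $\eta$ of $\sigma$, I would construct a single element $\chi$ with $\chi^*\chi=\rho+\sigma$ and $\omgr(\chi)\leq \omgr(\xi)+\omgr(\eta)$. Of course we may assume both purification ranks are finite, since otherwise the inequality is trivial. The natural recipe is to place the two purifications into orthogonal direct summands in the ``output'' Hilbert spaces, so that the cross terms in $\chi^*\chi$ vanish automatically.

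Concretely, write $\xi\in\bigotimes_{i=0}^n\mathcal B(\mathcal H_i,\mathcal H_i')$ and $\eta\in\bigotimes_{i=0}^n\mathcal B(\mathcal H_i,\mathcal H_i'')$, set $\widetilde{\mathcal H}_i:=\mathcal H_i'\oplus\mathcal H_i''$, and let $\iota_i,\kappa_i$ denote the canonical isometric inclusions. Let $\tilde\xi$ (resp.\ $\tilde\eta$) be obtained from $\xi$ (resp.\ $\eta$) by post-composing at each site $i$ with $\iota_i$ (resp.\ $\kappa_i$); both live in $\bigotimes_i\mathcal B(\mathcal H_i,\widetilde{\mathcal H}_i)$. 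Define $\chi:=\tilde\xi+\tilde\eta$. Expanding $\chi^*\chi$ produces four summands: the diagonal ones reduce to $\xi^*\xi=\rho$ and $\eta^*\eta=\sigma$ because $\iota_i^*\iota_i=\id$ and $\kappa_i^*\kappa_i=\id$ at each site, while the mixed terms vanish because $\iota_i^*\kappa_i=0$ for all $i$. Hence $\chi^*\chi=\rho+\sigma$.

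It remains to bound $\omgr(\chi)$. Post-composition at site $i$ with the fixed isometry $\iota_i$ converts any $(\Omega,G)$-decomposition of $\xi$ into one of $\tilde\xi$ of the same size, because it acts independently on each local vector and the invariance condition (b) of \cref{def:ogd} is preserved (the inclusion $\iota_i$ coincides for $i$ in the same orbit, since $\mathcal H_i'$ is constant along orbits—this is automatic because the local vector spaces of any $(\Omega,G)$-decomposition must agree on orbits). Therefore $\omgr(\tilde\xi)\leq \omgr(\xi)$ and $\omgr(\tilde\eta)\leq\omgr(\eta)$, and \cref{prop:subadd}(i) gives $\omgr(\chi)\leq \omgr(\xi)+\omgr(\eta)<\infty$. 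Thus $\chi$ is an $(\Omega,G)$-purification of $\rho+\sigma$, and taking infima over minimal purifications yields the claimed subadditivity.

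The only slightly delicate step is this $G$-equivariance of the direct-sum embedding; everything else is a clean consequence of the orthogonality of $\iota_i$ and $\kappa_i$. I would also note that the same direct-sum-of-outputs trick is structurally identical to the additivity proof for the standard operator Schmidt purification rank, so the argument is robust and should go through verbatim in the invariant setting once the orbit-constancy of the Hilbert spaces is recorded.
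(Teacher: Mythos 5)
Your proof is correct and matches the paper's argument: the paper likewise views both purifications as elements of $\mathcal B(\mathcal H_0,\mathcal H'_0\times\mathcal H''_0)\otimes\cdots\otimes \mathcal B(\mathcal H_n,\mathcal H'_n\times\mathcal H''_n)$ by sending local operators to the first or second summand (your $\iota_i,\kappa_i$), so the cross terms vanish and $(\xi+\chi)^*(\xi+\chi)=\rho+\sigma$, and then applies \cref{prop:subadd}~(i) to the sum exactly as you do. Your added remark on orbit-constancy of the enlarged spaces is a harmless elaboration of what the framework already guarantees.
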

\begin{proof} We can assume that both $\rho$ and $\sigma$ admit  $(\Omega,G)$-purifications 
 \begin{align*}\xi\in&\ \mathcal B(\mathcal H_0,\mathcal H'_0)\otimes\cdots\otimes \mathcal B(\mathcal H_n,\mathcal H'_n)\\\chi\in&\ \mathcal B(\mathcal H_0,\mathcal H''_0)\otimes\cdots\otimes \mathcal B(\mathcal H_n,\mathcal H''_n).\end{align*}
 We understand both $\xi$ and $\chi$ as elements from $$\mathcal B(\mathcal H_0,\mathcal H'_0\times\mathcal H''_0)\otimes\cdots\otimes \mathcal B(\mathcal H_n,\mathcal H'_n\times\mathcal H''_n),$$ by letting the local operators  from $(\Omega,G)$-decompositions map to the first/second components, respectively. This way we obtain $$(\xi+\chi)^*(\xi+\chi)=\xi^*\xi+\chi^*\chi=\rho+\sigma,$$ and to  $\xi+\chi$ we can apply \cref{prop:subadd} (i).
\end{proof}

\section{Some inequalities}
\label{sec:ineq}

In this section we derive some inequalities, 
first between the several ranks we have introduced  (\cref{ssec:ineqdiff}), 
then between different group actions on the same complex (\cref{ssec:chgroup}),
 and finally between different complexes (\cref{ssec:chcomplex}). 

\subsection{Inequalities between different ranks} 
\label{ssec:ineqdiff}

To compare the different ranks, we fix an action of $G$ on the connected  wsc $\Omega$. We further assume that $\mathcal V_i=\mathcal B(\mathcal H_i)$ for each $i\in[n]$.
The following statement is a  generalisation of the results in \cite{De19}:
\begin{proposition}\label{prop:ineq1}
For each $\sigma\in\mathcal V$ we have 
\begin{itemize}
\item[(i)] $\omgr(\sigma)\leq{\rm sep\mbox{-}rank}_{(\Omega,G)}(\sigma)$
\item[(ii)] ${\rm puri\mbox{-}rank}_{(\Omega,G)}(\sigma)\leq {\rm sep\mbox{-}rank}_{(\Omega,G)}(\sigma)$
\item[(iii)] $\omgr(\sigma)\leq {\rm puri\mbox{-}rank}_{(\Omega,G)}(\sigma)^2.$
\end{itemize}
\end{proposition}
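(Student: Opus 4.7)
The plan is to handle the three inequalities in increasing order of difficulty, with (i) being essentially definitional, (ii) requiring a purification construction that encodes the summation index into an auxiliary Hilbert space, and (iii) requiring an expansion of $\xi^{*}\xi$ re-indexed by pairs.

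For (i) I would simply note that any separable $(\Omega,G)$-decomposition is in particular an $(\Omega,G)$-decomposition (one only forgets the extra positivity requirement on the local operators), so the minimum over the larger class of decompositions can only be smaller.

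For (ii), given a separable $(\Omega,G)$-decomposition $\sigma=\sum_{\alpha\in\mathcal I^{\widetilde{\mathcal F}}}\sigma^{[0]}_{\alpha_{\mid 0}}\otimes\cdots\otimes\sigma^{[n]}_{\alpha_{\mid n}}$, I would let $t^{[i]}_\beta:=(\sigma^{[i]}_\beta)^{1/2}$ (the positive square root, which still satisfies $t^{[i]}_\beta=t^{[gi]}_{{}^g\beta}$ since the square root depends polynomially on $\sigma^{[i]}_\beta$) and enlarge the target Hilbert spaces to $\mathcal H'_i:=\mathcal H_i\otimes\ell^2\!\left(\mathcal I^{\widetilde{\mathcal F}_i}\right)$. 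I would then define the local purification operators $\xi^{[i]}_\beta\colon\mathcal H_i\to\mathcal H'_i$ by $\xi^{[i]}_\beta|\psi\rangle:=t^{[i]}_\beta|\psi\rangle\otimes|\beta\rangle$, and set $\xi:=\sum_{\alpha\in\mathcal I^{\widetilde{\mathcal F}}}\xi^{[0]}_{\alpha_{\mid 0}}\otimes\cdots\otimes\xi^{[n]}_{\alpha_{\mid n}}$, which is an $(\Omega,G)$-decomposition of $\xi$ with index set $\mathcal I$ once one identifies the auxiliary spaces at $i$ and $gi$ via the bijection $\beta\mapsto{}^g\beta$. Expanding $\xi^{*}\xi$ produces an orthogonality factor $\prod_i\langle\alpha_{\mid i}|\alpha'_{\mid i}\rangle=\prod_i\delta_{\alpha_{\mid i},\alpha'_{\mid i}}$, which forces $\alpha=\alpha'$ because $\widetilde{\mathcal F}=\bigcup_i\widetilde{\mathcal F}_i$, and what survives is exactly $\sigma$; so $\xi$ is an $(\Omega,G)$-purification of $\sigma$ with $(\Omega,G)$-rank at most $|\mathcal I|$.

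For (iii), starting from an $(\Omega,G)$-purification $\xi=\sum_{\alpha}\xi^{[0]}_{\alpha_{\mid 0}}\otimes\cdots\otimes\xi^{[n]}_{\alpha_{\mid n}}$ with index set $\mathcal I$ of size $r$, I would expand
\[
\sigma=\xi^{*}\xi=\sum_{\alpha,\alpha'\in\mathcal I^{\widetilde{\mathcal F}}}\bigl(\xi^{[0]}_{\alpha_{\mid 0}}\bigr)^{*}\xi^{[0]}_{\alpha'_{\mid 0}}\otimes\cdots\otimes\bigl(\xi^{[n]}_{\alpha_{\mid n}}\bigr)^{*}\xi^{[n]}_{\alpha'_{\mid n}}
\]
and reindex the pair $(\alpha,\alpha')$ by a single function in $(\mathcal I\times\mathcal I)^{\widetilde{\mathcal F}}$. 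Concretely, I set $\tau^{[i]}_{(\beta_1,\beta_2)}:=(\xi^{[i]}_{\beta_1})^{*}\xi^{[i]}_{\beta_2}$ for $(\beta_1,\beta_2)\in(\mathcal I\times\mathcal I)^{\widetilde{\mathcal F}_i}$; invariance $\tau^{[gi]}_{{}^g(\beta_1,\beta_2)}=\tau^{[i]}_{(\beta_1,\beta_2)}$ follows directly from the corresponding property of $\xi^{[i]}_\beta$ and the fact that ${}^g$ on $\mathcal I\times\mathcal I$-valued functions acts componentwise. This yields an $(\Omega,G)$-decomposition of $\sigma$ with index set of size $r^{2}$, proving $\omgr(\sigma)\leq\purirank_{(\Omega,G)}(\sigma)^{2}$. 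The only mildly delicate point I anticipate is the bookkeeping of $G$-equivariance of the auxiliary Hilbert spaces $\mathcal H'_i$ in step (ii); everything else reduces to a careful expansion of the definitions.
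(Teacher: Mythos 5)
Your proposal is correct and follows essentially the same route as the paper: (i) is definitional, (ii) is the paper's square-root construction with the summation index $\beta$ stored in an orthogonal auxiliary slot (the paper uses a direct product of copies of $\mathcal H_i$ where you use $\mathcal H_i\otimes\ell^2\bigl(\mathcal I^{\widetilde{\mathcal F}_i}\bigr)$, which is the same thing, and the same Kronecker-delta relation $\bigl(\xi^{[i]}_\beta\bigr)^*\xi^{[i]}_{\beta'}=\delta_{\beta,\beta'}\sigma^{[i]}_\beta$ yields $\xi^*\xi=\sigma$), and (iii) is exactly the pair-index tensor-product construction that the paper invokes through \cref{prop:subadd}(ii) to get $\omgr(\xi^*\xi)\leq\omgr(\xi)^2$. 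The equivariance bookkeeping for the auxiliary spaces along an orbit that you flag as the delicate point is handled at the same level of detail in the paper (which simply asserts that condition (b) of \cref{def:ogd} holds for the new local operators), so nothing is missing relative to the paper's own argument.
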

\begin{proof}  Since a separable \ogd{} is a special case of an $(\Omega,G)$-decomposition, (i) is clear.  For (ii) assume a separable \ogd{} for $\sigma$ over the index set $\mathcal I$ exists. Denote the local psd operators by $\sigma_\beta^{[i]}\in\mathcal V_i$ and define $$\tau_\beta^{[i]}:= \sqrt{\sigma_\beta^{[i]}}\in\mathcal V_i.$$ Then  consider the bounded operator
 \begin{align*}
\xi_\beta^{[i]}\colon \mathcal H_i & \to \mathcal H'_i:=\mathcal H_i\times \cdots \times\mathcal H_i \\
h & \mapsto (0,\ldots, 0, \tau_\beta^{[i]}h,0,\ldots, 0)
 \end{align*}
where the product runs over $\mathcal I^{\widetilde{\mathcal F}_i}$, and $\tau_\beta^{[i]}h$ appears in the entry indexed by $\beta$. 
These local operators fulfill (b) from \cref{def:ogd} and thus provide an $(\Omega,G)$-decomposition of some $\xi\in\mathcal V$, with $\omgr(\xi)\leq \vert \mathcal I\vert$. By construction we have $$\left(\xi_\beta^{[i]}\right)^*\xi_\gamma^{[i]} =\delta_{\beta,\gamma}\cdot  \sigma_\beta^{[i]}$$
where $\delta_{\beta,\gamma}$ is the Kronecker delta. 
This immediately implies $\xi^*\xi=\sigma,$ so we obtain ${\rm puri\mbox{-}rank}_{(\Omega,G)}(\sigma)\leq\vert \mathcal I\vert$, the desired result.
For (iii) we let $\xi$ be an  $(\Omega,G)$-purification of $\sigma$ and compute $$ \omgr(\sigma)=\omgr(\xi^*\xi)\leq \omgr(\xi^*)\omgr(\xi)=\omgr(\xi)^2$$ where we use the construction from \cref{prop:subadd} (ii). 
\end{proof}

\begin{remark}
(i) The purification rank cannot be upper bounded by a function of the rank alone, in general. This happens already in the case $\Lambda_1$,  the simple edge,  without group action.
Similarly, the separable rank cannot be upper bounded by a function of the purification rank, and thus of the rank, in general. This is true on the simple edge again, with and without a group action. This was shown in \cite{De19, De13c, Go12}, using a connection to factorisations of nonnegative matrices,  which is generalised in \cref{thm:corresp} below. 

(ii) There exist upper  bounds on the different ranks in terms of the dimension of the global space $\mathcal V$. We  refer the reader to \cite{De19}; the generalisations to the more general framework from this work are straightforward. 
\demo\end{remark}

\subsection{Changing the group}
\label{ssec:chgroup}

We now compare the ranks with respect to different group actions. So  throughout this section we fix an action of $G$ on the connected  wsc $\Omega,$  and  let  $H\subseteq G$ be a subgroup. Then the restricted action is an action of $H$ on $\Omega$, and we can compare the ranks with respect to $G$ and $H$.  Since   an $(\Omega, G)$-decomposition is also an $(\Omega, H)$-decomposition,  we clearly have    $${\rm rank}_{(\Omega,H)}(v)\leq {\rm rank}_{(\Omega,G)}(v)$$ for  all $v\in\mathcal V,$ and the same is true for the separable rank and the purification rank. 
 The first nontrivial result is about free actions as in \cref{thm:ogd}:

\begin{proposition}\label{thm:ineqfree} Let the action of $G$ on $\Omega$ be free and let $H$ be a normal subgroup of $G$. Then  for every $G$-invariant $v\in\mathcal V$ we have $${\rm rank}_{(\Omega,G)}(v)\leq \vert G/H\vert\cdot  {\rm rank}_{(\Omega,H)}(v),$$ and in particular $${\rm rank}_{(\Omega,G)}(v)\leq \vert G\vert\cdot  {\rm rank}_{\Omega}(v).$$ 
 The same inequalities also  hold for the separable rank.
\end{proposition}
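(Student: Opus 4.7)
The plan is to adapt the construction used in the proof of \cref{thm:ogd}, but starting from an $(\Omega,H)$-decomposition instead of an $\Omega$-decomposition, and enlarging the index set by the factor $|G/H|$. The second inequality follows from the first by taking $H=\{e\}$, in which case $(\Omega,H)$-decompositions are just $\Omega$-decompositions.

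First I fix a $G$-linear map $\mathbf{z}\colon\widetilde{\mathcal F}\to G$, which exists by \cref{rem:grac} (vii) since the $G$-action is free, and compose it with the quotient projection $G\to G/H$ to obtain a $G$-equivariant map $\bar{\mathbf z}\colon\widetilde{\mathcal F}\to G/H$ (where $G$ acts on $G/H$ by left multiplication). The normality of $H$ will be essential both for well-definedness of the construction below, and for the connectedness argument at the end.

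Given an $(\Omega,H)$-decomposition of $v$ with local tensors $W^{[i]}=(w^{[i]}_\beta)_{\beta\in\mathcal I^{\widetilde{\mathcal F}_i}}$, I set $\widetilde{\mathcal I}:=\mathcal I\times G/H$, with projections $p_1,p_2$, and define new local vectors
\[
v^{[i]}_\beta := \begin{cases} w^{[gi]}_{{}^g(p_1\circ\beta)} & \text{if } p_2\circ\beta=({}^{g^{-1}}\bar{\mathbf z})_{\mid_i}\ \text{for some}\ g\in G, \\ 0 & \text{else.}\end{cases}
\]
For well-definedness I would show that if two elements $g,g'\in G$ both satisfy the non-zero condition, then $g^{-1}g'\in \mathbf{z}(x)H\mathbf{z}(x)^{-1}=H$, the last equality by normality of $H$. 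Writing $g'=gh$ with $h\in H$ and $gh=h'g$ with $h'=ghg^{-1}\in H$, the $H$-invariance of $W^{[\cdot]}$ then gives $w^{[g'i]}_{{}^{g'}(p_1\circ\beta)}=w^{[gi]}_{{}^{g}(p_1\circ\beta)}$. Condition (b) of \cref{def:ogd} can then be checked by direct calculation, exactly as in the proof of \cref{thm:ogd}, using that $p_j\circ{}^g\beta={}^g(p_j\circ\beta)$ for $j=1,2$.

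Finally I compute the total sum. For $\tilde\alpha\in\widetilde{\mathcal I}^{\widetilde{\mathcal F}}$ to contribute non-trivially, at each vertex $i$ we need $p_2\circ\tilde\alpha_{\mid_i}=({}^{g_i^{-1}}\bar{\mathbf z})_{\mid_i}$ for some $g_i\in G$; using connectedness of $\Omega$ together with the fact that any two such $g_i,g_j$ must satisfy $g_i^{-1}g_j\in H$ (the same normality argument), I conclude that all $g_i$ lie in a common coset $gH\in G/H$. Hence the sum splits into $|G/H|$ many pieces indexed by cosets; for each coset the $p_2$-component of $\tilde\alpha$ is uniquely determined, while $p_1\circ\tilde\alpha$ ranges freely over $\mathcal I^{\widetilde{\mathcal F}}$, and after reindexing by $\alpha'={}^g(p_1\circ\tilde\alpha)$ this piece equals $g^{-1}\cdot v=v$ by $G$-invariance of $v$. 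The total is thus $|G/H|\cdot v$, which I absorb by uniformly rescaling each local vector by $|G/H|^{-1/(n+1)}$; this uniform scaling preserves condition (b) and yields an $(\Omega,G)$-decomposition of $v$ of size $|G/H|\cdot|\mathcal I|$. For the separable rank the same construction applies verbatim, since the new local vectors are positive scalar multiples of the psd vectors $w^{[i]}_\beta$, hence psd. The main obstacle will be the careful bookkeeping that uses normality of $H$ at the two key places (well-definedness and the coset coherence argument along connected paths of facets).
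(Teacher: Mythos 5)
Your proposal is correct and follows essentially the same route as the paper: start from an $(\Omega,H)$-decomposition, enlarge the index set to $\mathcal I\times G/H$, and run the construction of \cref{thm:ogd} with the projected $G$-linear map $\widetilde{\mathcal F}\to G\to G/H$, with normality of $H$ guaranteeing well-definedness. Your write-up merely makes explicit the details (coset coherence along connected facets, $H$-invariance giving independence of the coset representative, and the uniform rescaling) that the paper summarizes as ``all constructions are well-defined since $H$ is a normal subgroup.''
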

\begin{proof} The proof is almost the same as the one of \cref{thm:ogd}. This time start with an $(\Omega,H)$-decomposition of $v$, define $\widetilde{\mathcal I}=\mathcal I\times G/H$ and use the $G$-linear map $${\bf z'}\colon\widetilde{\mathcal F}\stackrel{\bf z}{\to} G\stackrel{\rm pr}{\to} G/H$$ 
instead of ${\bf z}$, where ${\rm pr}$ denotes the canonical projection map. 
All constructions are well-defined since $H$ is a normal subgroup, and result in an $(\Omega,G)$-decomposition of $v$. From $$\vert\widetilde{\mathcal I}\vert=\vert\mathcal I\times G/H\vert=\vert \mathcal I\vert\cdot\vert G/H\vert$$ the result follows.
\end{proof}

\begin{example}(i) For cyclically invariant elements on the circle $\Theta_n$ we obtain that the $(\Theta_n,C_n)$-rank is always at most $n$ times the $\Theta_n$-rank. This statement is given in \cite[Proposition 60]{De19}.

(ii) More generally, for a finite group $G$ with generating set $S$ as in \cref{ex:wsc} (vi), and for $G$-invariant elements on $\mathcal C(G,S)$, we obtain that 
the $G$-invariant rank is always at most $\vert G\vert$  times the rank. 
\demo\end{example}

The following result  concerns blending group actions and the trivial subgroup. For the proof we need a strengthening of the property of blending, called \emph{strong blending}. 
We say that the action of $G$ on $\Omega$ is strongly blending if whenever $\{g_00,\ldots,g_nn\}=[n]$   there exists some $g\in G$ such that  $gi=g_ii$ \emph{and} $g,g_i$ act identically on $\widetilde{\mathcal F}_i$, for all $i\in[n]$. On the  $n$-simplex,  for example, this is equivalent to  blending.

\begin{proposition}\label{prop:inreg}
There is a function $C\colon\N\to\N$ such that whenever the action of a group $G$ is strongly blending 
on the wsc $\Omega$,  we have $$ {\rm rank}_{(\Omega,G)}(v)\leq C(n)\cdot  {\rm rank}_{\Omega}(v)$$ for all $G$-invariant $v\in\mathcal V$. 
\end{proposition}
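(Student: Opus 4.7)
The plan is to adapt the proof of \cref{thm:genex} so that it starts from an $\Omega$-decomposition of $v$ rather than from a sum of elementary tensors; the strengthened blending assumption is precisely what allows this substitution, and it is what converts the final bound into one in terms of $\omr(v)$. The constant will be $C(n) := r$, where $r$ is the symmetric tensor rank of the fully symmetric tensor whose entries $T_{i_0, \ldots, i_n}$ equal $1$ if $\{i_0, \ldots, i_n\} = [n]$ and $0$ otherwise; this $r$ depends only on $n$, and by \cite[Lemma 4.2]{Co08c} there exist scalars $d_\ell^{[i]}$ satisfying $(\ref{eq:symtens})$.

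Starting from an $\Omega$-decomposition $v = \sum_{\alpha \in \mathcal I^{\widetilde{\mathcal F}}} v_{\alpha|_0}^{[0]} \otimes \cdots \otimes v_{\alpha|_n}^{[n]}$ that realises $|\mathcal I| = \omr(v)$, I would define, for each $i \in [n]$, each $\ell \in \{1, \ldots, r\}$ and each $\beta \in \mathcal I^{\widetilde{\mathcal F}_i}$,
$$\tilde v_{\ell, \beta}^{[i]} := \sum_{g \in G} d_\ell^{[gi]}\, v_{{}^g \beta}^{[gi]}.$$
A routine substitution $g \mapsto gh$ then verifies that for each fixed $\ell$ these local tensors satisfy condition (b) of \cref{def:ogd}, so they form an $(\Omega, G)$-decomposition of some vector $v_\ell$.

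Expanding the tensor product and summing over $\ell$, the defining property of the $d_\ell^{[i]}$ collapses the sum over $(g_0, \ldots, g_n) \in G^{n+1}$ to tuples satisfying $\{g_0 0, \ldots, g_n n\} = [n]$:
$$\sum_{\ell=1}^r v_\ell = \sum_{\alpha \in \mathcal I^{\widetilde{\mathcal F}}}\ \sum_{\{g_0 0, \ldots, g_n n\} = [n]} v_{{}^{g_0}(\alpha|_0)}^{[g_0 0]} \otimes \cdots \otimes v_{{}^{g_n}(\alpha|_n)}^{[g_n n]}.$$
Here strong blending is used: for each such tuple there is some $g \in G$ with $gi = g_i i$ \emph{and} such that $g$ and $g_i$ act identically on $\widetilde{\mathcal F}_i$ for every $i$; this latter condition forces the equality of functions ${}^{g_i}(\alpha|_i) = {}^g(\alpha|_i)$ on $\widetilde{\mathcal F}_{gi}$, so each inner tensor reduces to $v_{{}^g(\alpha|_0)}^{[g 0]} \otimes \cdots \otimes v_{{}^g(\alpha|_n)}^{[g n]}$. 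After re-indexing $\alpha \mapsto {}^{g^{-1}} \alpha$, the sum over $\alpha$ yields $g \cdot v = v$ by $G$-invariance; hence $\sum_\ell v_\ell = N v$, where $N \geq 1$ counts the admissible tuples (the tuple $g_0 = \cdots = g_n = e$ always qualifies).

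To conclude I would absorb the factor $1/N$ into one local vector per summand and apply \cref{prop:subadd}(i) to the $r$ resulting $(\Omega, G)$-decompositions, yielding $\omgr(v) \leq r\,|\mathcal I| = C(n)\,\omr(v)$. The main obstacle in transferring the \cref{thm:genex} argument is precisely that the local vectors now carry indices from $\mathcal I^{\widetilde{\mathcal F}_i}$ rather than scalar indices, so identifying ${}^{g_i}(\alpha|_i)$ with ${}^g(\alpha|_i)$ requires control of the action on the facet multiset and not just on the vertices---this is exactly the extra information that strong blending provides over plain blending.
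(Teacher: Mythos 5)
Your proposal is correct and follows essentially the same route as the paper's own proof: start from an $\Omega$-decomposition, symmetrise the local tensors via $\sum_{g\in G} d_\ell^{[gi]} v^{[gi]}_{{}^g\beta}$ with the scalars from \eqref{eq:symtens}, invoke strong blending exactly to replace ${}^{g_i}(\alpha_{\mid_i})$ by ${}^{g}(\alpha_{\mid_i})$, and conclude with \cref{prop:subadd}(i), yielding the same constant $C(n)=r$. The only cosmetic caveat is that absorbing $1/N$ into a single local vector could violate condition (b); scale the local tensors uniformly (e.g.\ by $N^{-1/(n+1)}$ at every site) instead.
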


Recall that the $n$ on the right hand side of the equation refers to the vertex set of $\Omega$, which is $[n]$.

\begin{proof}The proof  is similar to the one of \cref{thm:genex}. One starts with an $\Omega$-decomposition of $v$, with local vectors $w_\beta^{[i]}$, and defines  $$v^{[i]}_{\ell,\beta}:= \sum_{g\in G} d_\ell^{[gi]}w_{ {}^g\beta}^{[gi]},$$ with the same numbers $d_\ell^{[i]}$ as in the proof of \cref{thm:genex}. The rest of the proof is similar. Note that  to replace $$w_{{}^{g_i}(\alpha_{\mid_i})}^{[g_ii]}\ \mbox{ by } \ w_{{}^{g}(\alpha_{\mid_i})}^{[gi]}$$ in the last step of the proof, we need  strong blending of the action (this is also why the proof of \cref{thm:genex} did not start with an $\Omega$-decomposition of $v$, but with a decomposition as a sum of elementary tensors).

Note that $C(n)$ is called $r$ in the proof of \cref{thm:genex}, and  is  simply the symmetric tensor rank of the tensor defined in the right hand side of \eqref{eq:symtens}. 
\end{proof}

\begin{remark}(i) For the $n$-simplex $\Sigma_n$ and the full permutation group $G$, the $\Sigma_n$-rank is the tensor rank and the $(\Sigma_n,G)$-rank is the symmetric tensor rank. Until recently is was unknown whether these two ranks always coincide for symmetric tensors (this was known as \emph{Comon's conjecture}). It was then  shown in \cite{Sh18} that the symmetric tensor rank can be strictly larger than the tensor rank.  It seems that not much is known about  bounds of the symmetric tensor rank in terms of the tensor rank in general.  \cref{prop:inreg} applies to this case and gives a bound on the symmetric tensor rank in terms of the tensor rank and  $n$. For example, in the case $n=2$ (i.e.\ three-partite tensors) this bound is $C(2)=3$, as one easily checks. 

We conjecture that $C(n)=n+1$ for all $n$. In view of \cite{Zh16} it would be enough to show that the tensor rank of tensor defined on the right hand side of \eqref{eq:symtens} is at most $n+1$. We have verified this with Mathematica for up to $n=9$.

(ii) It is unclear whether \cref{prop:inreg} also holds for the purification rank. On the other hand, it clearly does not hold for the separable rank, since a separable $(\Omega,G)$-decomposition might not even exist for a $G$-invariant element, even if a separable $\Omega$-decomposition exists.
\demo\end{remark}

\subsection{Changing the simplicial complex}
\label{ssec:chcomplex}

Throughout this section let $$\Omega,\Psi\colon \mathcal P_n\to \N$$ be two weighted simplicial complexes on $[n]$.  We want to compare the $\Omega$-rank to the $\Psi$-rank of elements $v\in\mathcal V$. For simplicity, we do not employ a group action here. The following general construction will be used in all of the below results. It is in essence a generalisation of the idea of the proof of \cref{thm:nogroup}.

\begin{construction}\label{const:change}(i) We denote by $\widetilde{\mathcal F}(\Omega)\ \mbox{and}\ \widetilde{\mathcal F}(\Psi)$ the multisets of facets of $\Omega$ and $\Psi$, respectively. 
Assume we are given $v\in\mathcal V$ and a $\Psi$-decomposition using local vectors $w_\beta^{[i]}\in\mathcal V_i$ for  $i\in[n]$ and  $$\beta\colon \widetilde{\mathcal F}(\Psi)_i\to\mathcal I.$$ We want to turn this into an $\Omega$-decomposition of $v$ while keeping track of the index set. 
To this end, we assume there is an index set $\mathcal J$ and maps $\pi,\pi_i$ that make each of  the following diagrams commute:
$$
\xymatrix{\mathcal J^{\widetilde{\mathcal F}(\Omega)} \supseteq\mathcal D \ar[r]^{\quad\pi} \ar[d]_{\mid_i}& \mathcal I^{\widetilde{\mathcal F}(\Psi)}  \ar[d]^{\mid_i}  \\  \mathcal J^{\widetilde{\mathcal F}(\Omega)_i}\supseteq\mathcal D_i \ar[r]^{\quad\pi_i} & \mathcal I^{\widetilde{\mathcal F}(\Psi)_i}}
$$ 
In addition, we assume:
\begin{itemize}
\item $\pi$ and $\pi_i$ are  defined on subsets $\mathcal D$ and $\mathcal D_i$, respectively. 
\item For $\alpha\in \mathcal J^{\widetilde{\mathcal F}(\Omega)}$ we have: $\alpha\in\mathcal D \Leftrightarrow \alpha_{\mid_i}\in\mathcal D_i$ for all $i\in[n].$
\item $\pi\colon\mathcal D\to \mathcal I^{\widetilde{\mathcal F}(\Psi)}  $ is surjective and all fibers (that is, preimages of single elements) have the same cardinality.
\end{itemize} We will see below that these properties can often be found. 
So, given this, for $\beta\colon \widetilde{\mathcal F}(\Omega)_i\to\mathcal J$ we define $$v_\beta^{[i]}:= \left\{ \begin{array}{cl} w_{\pi_i(\beta)}^{[i]}&\colon  \beta\in\mathcal D_i\\ 0 &\colon \beta\notin\mathcal D_i.\end{array}\right.$$  We now compute
\begin{align*}
\sum_{\alpha\in \mathcal J^{\widetilde{\mathcal F}(\Omega)}} v_{\alpha_{\mid_0}}^{[0]} \otimes \cdots\otimes  v_{\alpha_{\mid_n}}^{[n]}&= \sum_{\alpha\in \mathcal D} w_{\pi_0(\alpha_{\mid_0})}^{[0]}\otimes\cdots \otimes w_{\pi_n(\alpha_{\mid_n})}^{[n]}\\
&\sim \sum_{\alpha\in \mathcal I^{\widetilde{\mathcal F}(\Psi)}} w_{\alpha_{\mid_0}}^{[0]}\otimes\cdots \otimes w_{\alpha_{\mid_n}}^{[n]}=v. 
\end{align*} 
For the first equation we have used that $\alpha\in\mathcal D$ if and only if  all $\alpha_{\mid_i}\in\mathcal D_i$. For the second  equation we have used surjectivity of $\pi$, and that all fibers have the same cardinality. We have thus provided an $\Omega$-decomposition of $v$ over the index set $\mathcal J$. Note that the local vectors from the arising $\Omega$-decomposition are among the ones from the inital $\Psi$-decomposition. So this construction also transforms a separable $\Psi$-decomposition into a separable $\Omega$-decomposition.

(ii) A special case of  the construction in (i) is the following. 
Assume there is  a set $X$ and compatible embeddings $$\xymatrix{ \widetilde{\mathcal F}(\Psi)\ar@{^{(}->}[r]^{\iota}  & X\times \widetilde{\mathcal F}(\Omega)  \\  \widetilde{\mathcal F}(\Psi)_i\ar@{^{(}->}[r]^{\iota_i} \ar@{}[u]|-*[@]{\subseteq}& X\times \widetilde{\mathcal F}(\Omega)_i  \ar@{}[u]|-*[@]{\subseteq}.}$$  For any index set $\mathcal I$ define $$\mathcal J:=\mathcal I^X$$ and obtain   an induced  commutative
  diagram of restriction maps
$$\xymatrix{\mathcal J^{\widetilde{\mathcal F}(\Omega)}=\mathcal I^{X\times\widetilde{\mathcal F}(\Omega)} \ar[r]^{\qquad \pi} \ar[d]_{\mid_i} &\mathcal I^{\widetilde{\mathcal F}(\Psi)} \ar[d]_{\mid_i}  \\   \mathcal J^{\widetilde{\mathcal F}(\Omega)_i}=\mathcal I^{X\times\widetilde{\mathcal F}(\Omega)_i} \ar[r]^{\qquad \pi_i} &\mathcal I^{\widetilde{\mathcal F}(\Psi)_i}  }$$ The above conditions are then obviously fulfilled. Note that $$\vert\mathcal J\vert =\vert\mathcal I\vert^{\vert X\vert}$$ holds in this case.
\demo\end{construction}

\begin{proposition}\label{prop:change} If $\Omega$ is connected, then for every other wsc $\Psi$ on $[n]$  and every  $v\in\mathcal V$ we have $${\rm rank}_\Omega(v) \leq  {\rm rank}_{\Psi}(v)^{\left\vert \widetilde{\mathcal{F}}(\Psi)\right\vert}.$$ The same is true for the separable rank and the purification rank.
\end{proposition}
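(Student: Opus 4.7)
The plan is to apply \cref{const:change} (i) with the index set $\mathcal J := \mathcal I^{\widetilde{\mathcal F}(\Psi)}$, where $\mathcal I$ is the index set of a $\Psi$-decomposition of $v$; we may assume ${\rm rank}_\Psi(v) < \infty$ (else the inequality is trivial) and that $\vert\mathcal I\vert = {\rm rank}_\Psi(v)$. We view elements of $\mathcal J^{\widetilde{\mathcal F}(\Omega)}$ (resp.\ $\mathcal J^{\widetilde{\mathcal F}(\Omega)_i}$) as $\mathcal I$-valued functions on $\widetilde{\mathcal F}(\Omega) \times \widetilde{\mathcal F}(\Psi)$ (resp.\ $\widetilde{\mathcal F}(\Omega)_i \times \widetilde{\mathcal F}(\Psi)$), which lets us encode a single $\Psi$-index-tuple inside a constant $\mathcal J$-tuple over $\widetilde{\mathcal F}(\Omega)$.

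Take $\mathcal D_i \subseteq \mathcal J^{\widetilde{\mathcal F}(\Omega)_i}$ to be the set of constant functions, let $\pi_i$ send such a function to its constant value in $\mathcal J = \mathcal I^{\widetilde{\mathcal F}(\Psi)}$ restricted to $\widetilde{\mathcal F}(\Psi)_i$, and set $\mathcal D := \{\alpha \in \mathcal J^{\widetilde{\mathcal F}(\Omega)} : \alpha_{\mid_i} \in \mathcal D_i \text{ for all } i \in [n]\}$, with $\pi$ sending $\alpha \in \mathcal D$ to its constant value in $\mathcal I^{\widetilde{\mathcal F}(\Psi)}$. The crucial step is to show that $\mathcal D$ is exactly the set of constant functions $\widetilde{\mathcal F}(\Omega) \to \mathcal J$: one inclusion is obvious, while the reverse uses connectedness of $\Omega$. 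Given $G, G' \in \widetilde{\mathcal F}(\Omega)$, pick $i \in G$ and $i' \in G'$ together with a path of neighbors $i=i_0, \ldots, i_k=i'$ in $\Omega$; each consecutive pair $i_{j-1}, i_j$ shares some facet $H_j \in \widetilde{\mathcal F}(\Omega)_{i_{j-1}} \cap \widetilde{\mathcal F}(\Omega)_{i_j}$, producing a chain $G, H_1, \ldots, H_k, G'$ whose consecutive members lie in a common $\widetilde{\mathcal F}(\Omega)_{i_j}$, and the local constancy of $\alpha$ forces the value to propagate along it. Hence $\pi$ is a bijection onto $\mathcal I^{\widetilde{\mathcal F}(\Psi)}$, the commutative diagram and the equivalence $\alpha \in \mathcal D \Leftrightarrow \alpha_{\mid_i} \in \mathcal D_i \text{ for all } i$ hold by construction, and \cref{const:change} (i) produces an $\Omega$-decomposition of $v$ over $\mathcal J$, giving ${\rm rank}_\Omega(v) \leq \vert\mathcal J\vert = \vert\mathcal I\vert^{\vert\widetilde{\mathcal F}(\Psi)\vert} = {\rm rank}_\Psi(v)^{\vert\widetilde{\mathcal F}(\Psi)\vert}$.

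The local vectors of the resulting $\Omega$-decomposition are, by construction, among those of the starting $\Psi$-decomposition or equal to zero; in particular, positivity is preserved, so the same argument applied to a minimum-size separable $\Psi$-decomposition yields a separable $\Omega$-decomposition of the same rank bound. For the purification rank, applying the rank bound to a minimum-rank $\Psi$-purification $\xi$ of $\sigma$ --- which remains an $\Omega$-purification because the identity $\xi^*\xi = \sigma$ is independent of the underlying complex --- gives ${\rm puri\mbox{-}rank}_\Omega(\sigma) \leq {\rm puri\mbox{-}rank}_\Psi(\sigma)^{\vert\widetilde{\mathcal F}(\Psi)\vert}$. The main technical obstacle is the connectedness-based identification of $\mathcal D$ with the constant functions; after that, everything reduces to the bookkeeping already set up in \cref{const:change}.
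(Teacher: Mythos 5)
Your proof is correct and takes essentially the same route as the paper: it applies \cref{const:change} (i) with $\mathcal J=\mathcal I^{\widetilde{\mathcal F}(\Psi)}$, with $\mathcal D,\mathcal D_i$ the constant maps and $\pi,\pi_i$ evaluation (resp.\ evaluation followed by restriction), and your connectedness/path argument simply spells out the verification the paper dismisses as ``easily checked.'' The treatment of the separable and purification ranks (psd local vectors are preserved; an $\Omega$-decomposition of a $\Psi$-purification $\xi$ is again a purification since $\xi^*\xi=\sigma$ does not depend on the complex) matches the intended argument as well.
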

\begin{proof}
We apply \cref{const:change} (i)  with $\mathcal J:=\mathcal I^{\widetilde{\mathcal F}(\Psi)}$, $\mathcal D$ and $\mathcal D_i$ the sets of constant mappings, and $\pi,\pi_i$ the mappings that send a constant function to its image. 
All  conditions from \cref{const:change} (i) are easily checked to hold. From $$\vert \mathcal J \vert =\vert\mathcal I\vert^{\vert \widetilde{\mathcal F}(\Psi)\vert}$$ the result follows. 
\end{proof}

\begin{example} \label{ex:ineqtr}
For every connected wsc $\Omega$ we have $${\rm rank}_\Omega(v)\leq {\rm rank}_{\Sigma_n}(v).$$ This is clear since $\widetilde{\mathcal F}(\Sigma_n)$ is a singleton. The rank on the right is just the usual tensor rank. This is exactly what was shown in the proof of \cref{thm:nogroup}.
\demo\end{example}

One can improve upon \cref{prop:change} if the two complexes  are not completely independent of each other. We first consider the case $\Psi=m\Omega$ for some $m\in\N.$

\begin{proposition}
For every  wsc $\Omega$,   $m\in\N$ and   $v\in\mathcal V$ we have $${\rm rank}_{m\Omega}(v)\leq \lceil{{\rm rank}_\Omega(v)^{1/m}}\rceil\ \mbox{ and }\ {\rm rank}_{\Omega}(v)\leq {\rm rank}_{m\Omega}(v)^m.$$ The same is true for the separable rank and the purification rank.
\end{proposition}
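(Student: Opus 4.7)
The key observation is that $\widetilde{\mathcal F}(m\Omega)$ is, by definition, in natural bijection with $M\times\widetilde{\mathcal F}(\Omega)$ for any set $M$ of cardinality $m$, and the same bijection restricts at each vertex to $\widetilde{\mathcal F}(m\Omega)_i\cong M\times\widetilde{\mathcal F}(\Omega)_i$. The plan is to feed this pair of bijections into \cref{const:change} and read off both inequalities.

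For the second inequality $\rank_{\Omega}(v)\le \rank_{m\Omega}(v)^{m}$, I would start from an $m\Omega$-decomposition of $v$ on an index set $\mathcal I$ with $|\mathcal I|=\rank_{m\Omega}(v)=:r$ and apply \cref{const:change} (ii) with source complex $\Psi:=m\Omega$, target complex $\Omega$, and $X:=M$; the bijections above serve as the required compatible embeddings. The construction then produces an $\Omega$-decomposition of $v$ over the index set $\mathcal J=\mathcal I^{M}$, whose cardinality is $r^{m}$.

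For the first inequality $\rank_{m\Omega}(v)\le \lceil \rank_\Omega(v)^{1/m}\rceil$, I would use the more general \cref{const:change} (i). Set $r:=\rank_\Omega(v)$ and $s:=\lceil r^{1/m}\rceil$, so that $s^{m}\ge r$; pick a set $\mathcal J$ with $|\mathcal J|=s$ and fix any injection $\iota\colon \mathcal I\hookrightarrow \mathcal J^{M}$. Starting from an optimal $\Omega$-decomposition of $v$ on $\mathcal I$, I would apply \cref{const:change} (i) with target complex $m\Omega$ and source complex $\Omega$, identifying $\mathcal J^{\widetilde{\mathcal F}(m\Omega)}=(\mathcal J^{M})^{\widetilde{\mathcal F}(\Omega)}$ and taking $\mathcal D$ to be those assignments $\widetilde{\mathcal F}(\Omega)\to\mathcal J^{M}$ taking values in $\iota(\mathcal I)$ on every facet, with $\mathcal D_i$ defined analogously and $\pi,\pi_i$ given by applying $\iota^{-1}$ facetwise. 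Restriction-compatibility holds immediately, and since $\iota$ is injective every fiber of $\pi$ is a singleton, so the scaling factor ``$\sim$'' at the end of \cref{const:change} (i) equals $1$. The construction therefore produces the desired $m\Omega$-decomposition of $v$ on $\mathcal J$.

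Both applications of \cref{const:change} only reuse or zero out local vectors of the initial decomposition, so positive semidefiniteness is preserved and the two inequalities transfer verbatim to the separable rank. For the purification rank, I would apply the $\rank_\Omega$/$\rank_{m\Omega}$ bounds to an optimal $\Omega$- or $m\Omega$-purification $\xi$ of $\sigma$; rewriting the decomposition of $\xi$ leaves $\xi^{*}\xi=\sigma$ unchanged, so the bounds transfer. I expect the one step requiring genuine care to be the verification of the three hypotheses of \cref{const:change} (i) for the first inequality, but injectivity of $\iota$ reduces each of them to routine bookkeeping.
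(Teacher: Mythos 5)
Your proposal is correct and follows essentially the same route as the paper: the second inequality via \cref{const:change}~(ii) with $X$ a set of $m$ elements, and the first via \cref{const:change}~(i) with an injection $\mathcal I\hookrightarrow\mathcal J^{m}$ and $\mathcal D,\mathcal D_i$ the assignments taking values in (the image of) $\mathcal I$. Your verification of the three hypotheses of \cref{const:change}~(i) and the remarks on the separable and purification ranks simply spell out details the paper leaves implicit.
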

\begin{proof}
The multiset $\widetilde{\mathcal F}(m\Omega)$ arises from  $\widetilde{\mathcal F}(\Omega)$ by raising the multiplicity of each element by a factor of $m$. So the first inequality is obtained via \cref{const:change} (i)  by choosing an index set $\mathcal J$ with $\mathcal I\hookrightarrow \mathcal{J}^m$ and taking as $\mathcal D,\mathcal D_i$ those functions that only take values in $\mathcal I$.
The second inequality is obtained via   \cref{const:change} (ii) by choosing $X$ as a set with $m$ elements. 
\end{proof}

Finally, for the next result, let $G\neq\{e\}$ be a finite group with two generating sets $S,T$ as in \cref{ex:wsc} (vi), giving rise to the  Cayley complexes $\mathcal C(G,S)$ and $\mathcal C(G,T).$ 

\begin{proposition}   For every $v\in\mathcal V$ we have $${\rm rank}_{\mathcal C(G,S)}(v)\leq  {\rm rank}_{\mathcal C(G,T)}(v)^{\left\lceil{\frac{\vert T\vert}{\vert S\vert}}\right\rceil}.$$The same inequality is  true for the separable rank and the purification rank.
\end{proposition}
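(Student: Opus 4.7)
The plan is to apply Construction~\ref{const:change}(ii), in the same spirit as the proofs of the previous two propositions. Set $m := \left\lceil |T|/|S| \right\rceil$ and take $X$ to be a set of cardinality $m$; then the new index set $\mathcal J := \mathcal I^X$ satisfies $|\mathcal J| = |\mathcal I|^{m}$, which matches the exponent in the claimed inequality.

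What needs to be produced is a pair of compatible embeddings
\[
\iota\colon \widetilde{\mathcal F}(\mathcal C(G,T)) \hookrightarrow X \times \widetilde{\mathcal F}(\mathcal C(G,S)), \qquad \iota_g\colon \widetilde{\mathcal F}(\mathcal C(G,T))_g \hookrightarrow X \times \widetilde{\mathcal F}(\mathcal C(G,S))_g,
\]
such that $\iota$ restricts to $\iota_g$ at every vertex $g \in G$. A local count gives $|\widetilde{\mathcal F}(\mathcal C(G,T))_g| = 2|T| \leq 2m|S| = |X \times \widetilde{\mathcal F}(\mathcal C(G,S))_g|$, so local injections exist by pigeonhole. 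To produce them coherently, I would exploit the free transitive left-multiplication action of $G$ on both Cayley complexes recorded in Example~\ref{ex:wsc}(vi): choose an injection $\iota_e$ at the identity vertex on the labelled set $T \times \{\mathrm{in}, \mathrm{out}\}$ and propagate $G$-equivariantly to obtain $\iota_g$ for every~$g$.

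The main obstacle I anticipate is reconciling the ``in'' and ``out'' sides at each directed edge. A directed $T$-edge $(g, gt)$ appears in $\widetilde{\mathcal F}(\mathcal C(G,T))_g$ as an out-slot but in $\widetilde{\mathcal F}(\mathcal C(G,T))_{gt}$ as an in-slot, so its single image under $\iota$ must lie simultaneously in $X \times \widetilde{\mathcal F}(\mathcal C(G,S))_g$ and $X \times \widetilde{\mathcal F}(\mathcal C(G,S))_{gt}$, i.e.\ its $S$-edge component has to be incident to both vertices. Pinning down $\iota_e$ on $T \times \{\mathrm{in}, \mathrm{out}\}$ and the $X$-labels so that this compatibility is achieved simultaneously for every $t \in T$ -- together with the $G$-equivariance that then propagates it to every vertex -- is the real combinatorial content of the argument.

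Once $\iota$ is in place, Construction~\ref{const:change}(ii) immediately yields $\rank_{\mathcal C(G,S)}(v) \leq |\mathcal J| = \rank_{\mathcal C(G,T)}(v)^{m}$. The bounds for the separable and purification ranks then follow at once: Construction~\ref{const:change} produces the new local vectors as (zero or copies of) the old ones, so positive semidefiniteness is preserved and the separable case is immediate; and applied to a $\mathcal C(G,T)$-purification $\xi$ of $\sigma$ the construction delivers a $\mathcal C(G,S)$-decomposition of the same $\xi$ of the claimed rank, which is a $\mathcal C(G,S)$-purification of $\sigma$.
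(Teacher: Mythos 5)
Your plan follows the same route as the paper's proof: apply \cref{const:change} (ii) with a set $X$ of cardinality $m=\lceil |T|/|S|\rceil$, so that $\mathcal J=\mathcal I^X$ has the right size, and let the whole burden fall on producing the compatible embeddings $\iota,\iota_g$. The paper produces them by fixing an injection $\varphi=(\varphi_1,\varphi_2)\colon T\hookrightarrow X\times S$ and setting $\iota\colon (g,gt)\mapsto(\varphi_1(t),(g,g\varphi_2(t)))$ on directed edges. You, however, never construct $\iota$: you verify only the single-vertex cardinality bound $2|T|\le 2m|S|$ and explicitly defer the ``in/out reconciliation'' as the real combinatorial content. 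That deferred step is a genuine gap, and not a routine one. Compatibility in \cref{const:change} (ii) requires that for every vertex incident to a facet copy of $\mathcal C(G,T)$, the $\mathcal C(G,S)$-component of its image be incident to that same vertex. Applied to the directed $T$-edge $(g,gt)$, which lies in both $\widetilde{\mathcal F}(\mathcal C(G,T))_g$ and $\widetilde{\mathcal F}(\mathcal C(G,T))_{gt}$, this forces the image $S$-edge to be a copy of the facet $\{g,gt\}$ itself, which exists only when $t\in S\cup S^{-1}$. Hence no choice of $\iota_e$ on $T\times\{\mathrm{in},\mathrm{out}\}$, however the $X$-labels are arranged and however it is propagated $G$-equivariantly, can meet the requirement once some generator of $T$ is not, up to inversion, in $S$: already for $G=C_5$, $S=\{1\}$, $T=\{1,2\}$ (the paper's own example) the $T$-facet $\{g,g+2\}$ has no $S$-facet with the same vertex set. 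A pigeonhole count at one vertex cannot see this two-endpoint constraint, which is exactly why the step you postponed is the crux rather than a technicality.

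For what it is worth, the obstacle you flagged is also the delicate point of the paper's own argument: the assignment $(g,gt)\mapsto(\varphi_1(t),(g,g\varphi_2(t)))$ is compatible at the tail $g$, but its $S$-component is incident to $g\varphi_2(t)$ rather than to $gt$, so head-vertex compatibility again needs $\varphi_2(t)=t$. Pushing this strategy through honestly seems to require either routing each $t\in T$ along an $S$-word from $g$ to $gt$ (the natural fix, but then the exponent reflects word lengths and the congestion on each $S$-edge rather than just $\lceil|T|/|S|\rceil$), or else weakening the compatibility demanded in \cref{const:change}. Your closing reductions of the separable and purification cases to the plain rank case are fine and agree with the paper, but they inherit the same unproved step.
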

\begin{proof} This is obtained through  \cref{const:change} (ii) by choosing a set $X$ such that there is an injective mapping $\varphi=(\varphi_1,\varphi_2)\colon T\hookrightarrow X\times S$. The inclusions from  \cref{const:change} (ii) are then obtained as  \begin{equation}\tag*{} (g,gt) \mapsto (\varphi_1(t),(g, g\varphi_2(t))), \end{equation} where we regard the multiedges as directed edges in the Cayley graph.
\end{proof}

\begin{example}
We consider the group $C_5$ with the two generating sets $S=\{1\}$ and $T=\{1,2\}$. 

\bigskip
\begin{center}
\begin{tikzpicture}
\filldraw ({cos(0-18)-5},{sin(0-18)}) circle (2pt);
\filldraw ({cos(1*360/5-18)-5},{sin(1*360/5-18)}) circle (2pt);
\filldraw ({cos(2*360/5-18)-5},{sin(2*360/5-18)}) circle (2pt);
\filldraw ({cos(3*360/5-18)-5},{sin(3*360/5-18)}) circle (2pt);
\filldraw ({cos(4*360/5-18)-5},{sin(4*360/5-18)}) circle (2pt);
\draw[thick]  ({cos(0-18)-5},{sin(0-18)}) --  ({cos(1*360/5-18)-5},{sin(1*360/5-18)}) --({cos(2*360/5-18)-5},{sin(2*360/5-18)}) --({cos(3*360/5-18)-5},{sin(3*360/5-18)}) -- ({cos(4*360/5-18)-5},{sin(4*360/5-18)}) --cycle;
\put(-157,-45) {$S=\{1\}$}; 
\filldraw ({cos(0-18)},{sin(0-18)}) circle (2pt);
\filldraw ({cos(1*360/5-18)},{sin(1*360/5-18)}) circle (2pt);
\filldraw ({cos(2*360/5-18)},{sin(2*360/5-18)}) circle (2pt);
\filldraw ({cos(3*360/5-18)},{sin(3*360/5-18)}) circle (2pt);
\filldraw ({cos(4*360/5-18)},{sin(4*360/5-18)}) circle (2pt);
\draw[thick]  ({cos(0-18)},{sin(0-18)}) --  ({cos(1*360/5-18)},{sin(1*360/5-18)}) --({cos(2*360/5-18)},{sin(2*360/5-18)}) --({cos(3*360/5-18)},{sin(3*360/5-18)}) -- ({cos(4*360/5-18)},{sin(4*360/5-18)}) --cycle;
\draw[thick]  ({cos(0-18)},{sin(0-18)}) -- ({cos(2*360/5-18)},{sin(2*360/5-18)})  -- ({cos(4*360/5-18)},{sin(4*360/5-18)}) --  ({cos(1*360/5-18)},{sin(1*360/5-18)}) -- ({cos(3*360/5-18)},{sin(3*360/5-18)}) --cycle;
\put(-15,-45) {$T=\{1,2\}$}; 
\end{tikzpicture}
\end{center}
\vspace{1cm}
We obtain \begin{equation}\tag*{}{\rm rank}_{\mathcal C(C_5,T)}(v)\leq {\rm rank}_{\mathcal C(C_5,S)}(v)\leq  {\rm rank}_{\mathcal C(C_5,T)}(v)^{2}\end{equation} for all $v\in\mathcal V.$ 
\demo\end{example}

\section{Applications to nonnegative tensors}\label{sec:nn}

In this section we consider the global space $$\mathcal V=\C^{d_0}\otimes\cdots \otimes \C^{d_n},$$ and in there the convex cone of tensors with  nonnegative entries. For $n=1$ we have $$\mathcal V={\rm Mat}_{d_0,d_1}(\C)$$ and we thus study nonnegative matrices. For nonnegative matrices, several factorisations and corresponding ranks have been proposed and studied in the literature (see, e.g., \cite{Be94}), 
with applications to  areas such as discrete geometry, 
lifting techniques in optimisation 
and quantum theory (see for example \cite{De19} and references therein). 
 Among them are the rank, the nonnegative rank  \cite{Ya91}, the psd-rank \cite{Fi15,Fa15b}, the completely positive rank \cite{Be15b} and the completely positive semidefinite (transposed) rank \cite{La15c,De19}. 
 We will now generalise these ranks to the multipartite case, and prove a correspondence to the ranks considered in \cref{sec:inv}. This will  allow us to obtain inequalities for the new ranks as easy corollaries of the already proven inequalities.

Any $M\in\mathcal V=\C^{d_0}\otimes\cdots \otimes \C^{d_n}$ can be written as   
$$M:=\sum_{i_0,\ldots, i_n} m_{i_0\ldots i_n} e_{i_0}\otimes \cdots \otimes e_{i_n}$$ 
where $e_{j}$ is the vector with a 1 in position $j$ and 0 else, and where the
complex numbers $m_{i_0\ldots i_n} $ are uniquely defined. 
By definition 
 $M$ is \emph{nonnegative}  if all $m_{i_0\ldots i_n}$ are nonnegative real numbers. To such $M$ we now associate a diagonal matrix $$\sigma\in {\rm Mat}_{d_0}(\C)\otimes \cdots\otimes {\rm Mat}_{d_n}(\C)\cong {\rm Mat}_{d_0\cdots d_n}(\C)$$ by the formula
$$\sigma=\sum_{i_0,\ldots, i_n} m_{i_0\ldots i_n} E_{i_0i_0}\otimes \cdots\otimes E_{i_ni_n},$$
where $E_{jk}$ is the matrix with a 1 in position $(j,k)$ and 0 everywhere else.  Clearly $\sigma$ is positive semidefinite if and only if $M$ is nonnegative, and in this case $\sigma$ is automatically separable. We now define  different types of nonnegative  decompositions  of $M$, 
and relate them to   decompositions of $\sigma$  considered in \cref{sec:inv}. 
This is a generalisation of  the results in \cite[Section 4]{De19}  to the multipartite case on a general wsc. So throughout this section we let $\Omega$ be a connected wsc on $[n]$ equipped with an action from the group $G$. Clearly, $G$-invariance of $\sigma$ is equivalent to $G$-invariance of $M$.

\begin{definition} Let $M\in \C^{d_0}\otimes \cdots \otimes \C^{d_n}$ be given.
\begin{itemize}
\item[(i)] \emph{An \ogd{} of $M$} is defined exactly as in  \cref{def:ogd} (we repeat the definition here to obtain a  consistent numbering). 
\item[(ii)] \emph{A nonnegative \ogd{} of $M$} is an \ogd{} of $M$ in which all local vectors have real nonnegative entries. The corresponding rank is called the \emph{nonnegative $(\Omega,G)$-rank of $M$,} denoted  ${\rm nn\mbox{-}rank}_{(\Omega,G)}(M).$
\item[(iii)] \emph{A positive semidefinite \ogd{} of $M$} consists of positive semidefinite matrices $$0\leqslant E_j^{[i]}\in {\rm Mat}_{\mathcal I^{\widetilde{\mathcal F}_i}}(\C)$$ for $i=0,\ldots, n$ and $j=1,\ldots, d_i,$ such that  $$\left(E_j^{[gi]}\right)_{{}^g\beta, {}^g\beta'}=\left(E_j^{[i]}\right)_{\beta,\beta'}$$ for all $i,g,j,\beta,\beta'$, and $$m_{i_0\ldots i_n}=\sum_{\alpha,\alpha'\in\mathcal I^{\widetilde{\mathcal F}}} \left(E_{i_0}^{[0]}\right)_{\alpha_{\mid_0}, \alpha'_{\mid_0}}\cdots \left(E_{i_n}^{[n]}\right)_{\alpha_{\mid_n}, \alpha'_{\mid_n}}$$ for all $i_0, \ldots, i_n$. 
The smallest cardinality of such an index set $\mathcal I$ is called the  \emph{positive semidefinite $(\Omega,G)$-rank of $M$}, denoted ${\rm psd\mbox{-}rank}_{(\Omega,G)}(M).$\demo
\end{itemize}
\end{definition}

\begin{remark}
(i) In the case $n=1$ and $\Omega=\Lambda_1=\Sigma_1$, i.e.\ the simple edge, the different ranks of $M$ are precisely the matrix  ranks listed in \cite[Section 4.2]{De19}. Without a group action, the $\Lambda_1$-rank is the  rank, the nonnegative $\Lambda_1$-rank is the nonnegative rank, and the positive semidefinite $\Lambda_1$-rank is the psd-rank. With action from $G=C_2$, the $(\Lambda_1,C_2)$-rank is the symmetric rank,  the nonnegative $(\Lambda_1,C_2)$-rank is the cp-rank, and the positive semidefinite  $(\Lambda_1,C_2)$-rank  is the cpsdt-rank.

(ii) For $M$ to have a nonnegative $(\Omega,G)$-decomposition, it is necessary that $M$ is $G$-invariant and that every entry of $M$ is nonnegative. If the action of $G$  on the connected wsc $\Omega$ is free, $G$-invariance and nonnegativity is also sufficient for a nonnegative $(\Omega,G)$-decomposition. This follows directly from \cref{rem:cones}, and also from \cref{thm:corresp} (ii)  in combination with \cref{thm:sep}.

(iii) For $M$ to have a positive semidefinite $(\Omega,G)$-decomposition, it is necessary that $M$ be $G$-invariant and that all entries of $M$ be nonnegative. The upcoming \cref{thm:corresp} (iii), together with \cref{prop:puriex}, ensures sufficiency of these conditions in many cases.\demo\end{remark}

The following is a generalisation of  \cite[Theorem 38]{De19}, which is recovered in the case that $\Omega=\Sigma_1=\Lambda_1$, and $G$ is the trivial group or $C_2$. 

\begin{theorem}\label{thm:corresp}
Let $G$ act on the connected wsc $\Omega$, and let  $M$ and $\sigma$  be defined as above. We then have:
\begin{itemize}
\item[(i)] $\omgr(M)=\omgr(\sigma)$
\item[(ii)] ${\rm nn\mbox{-}rank}_{(\Omega,G)}(M)={\rm sep\mbox{-}rank}_{(\Omega,G)}(\sigma)$
\item[(iii)]${\rm psd\mbox{-}rank}_{(\Omega,G)}(M)={\rm puri\mbox{-}rank}_{(\Omega,G)}(\sigma).$
 \end{itemize}
\end{theorem}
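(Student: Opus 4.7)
The plan is to prove each of the three equalities by exhibiting rank-preserving constructions in both directions, all built on the same underlying observation: $\sigma$ is the ``diagonal matrix encoding'' of $M$, and tensor products of matrices commute with taking diagonals. Throughout, an $\omgr$-style decomposition of $M$ on the vector side and one of $\sigma$ on the matrix side can be converted into each other site-by-site, preserving both the index set $\mathcal{I}$ and the $G$-equivariance condition (b) of \cref{def:ogd}.

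For part (i), I would use the maps $v \mapsto \diag(v)$ and $A \mapsto (A_{jj})_j$. Given an \ogd{} of $M$ with local vectors $v^{[k]}_\beta \in \C^{d_k}$, set $\sigma^{[k]}_\beta := \diag(v^{[k]}_\beta) \in \mathrm{Mat}_{d_k}(\C)$; using $\diag(a)\otimes\diag(b)=\diag(a\otimes b)$ and the definition of $\sigma$, the result is an \ogd{} of $\sigma$ with the same index set. Conversely, reading off the diagonal entries of an \ogd{} of $\sigma$ gives an \ogd{} of $M$. Both operations are $G$-equivariant (they are applied site-wise and permute entries in the same way that $G$ permutes the indices), so condition (b) transfers in both directions. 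Part (ii) follows by restriction: a nonnegative vector yields a psd diagonal matrix, and the diagonal of a psd matrix is a nonnegative vector, so the bijection of (i) restricts to a bijection between nonnegative and separable decompositions.

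For part (iii), the easy inequality $\psdrank_{(\Omega,G)}(M)\leq \purirank_{(\Omega,G)}(\sigma)$ goes as follows: given an $(\Omega,G)$-purification $\xi$ of $\sigma$ with local operators $\xi^{[k]}_\beta$ over $\mathcal I$, define
\[
\bigl(E^{[k]}_j\bigr)_{\beta,\beta'} := \langle \xi^{[k]}_\beta e_j,\ \xi^{[k]}_{\beta'} e_j\rangle.
\]
Each $E^{[k]}_j$ is manifestly a Gram matrix, hence psd, and the equivariance $\xi^{[gk]}_{{}^g\beta}=\xi^{[k]}_\beta$ transports directly into the joint invariance $(E^{[gk]}_j)_{{}^g\beta,{}^g\beta'}=(E^{[k]}_j)_{\beta,\beta'}$. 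Expanding $\xi^*\xi=\sigma$ and reading the diagonal entry at $(i_0,\ldots,i_n)$ reproduces the formula $m_{i_0\ldots i_n} = \sum_{\alpha,\alpha'}\prod_k (E^{[k]}_{i_k})_{\alpha_{\mid_k},\alpha'_{\mid_k}}$.

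For the reverse inequality, I would construct a purification of size $\vert\mathcal I\vert$ from a psd \ogd{}. For each $k,j$, take a Gram factorization $E^{[k]}_j=(X^{[k]}_j)^*X^{[k]}_j$ and define $\xi^{[k]}_\beta \colon \C^{d_k}\to \C^{d_k}\otimes\mathcal K_k$ by $e_j\mapsto e_j\otimes X^{[k]}_j e_\beta$. The auxiliary copy of $\C^{d_k}$ forces off-diagonal entries of $\xi^*\xi$ to vanish through the Kronecker $\delta_{i_k j_k}$, while the diagonal entries recover exactly the psd-decomposition formula; $G$-invariance of $M$ (equivalently of $\sigma$) ensures the construction is compatible with the global sum. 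The main obstacle is ensuring the equivariance condition $\xi^{[gk]}_{{}^g\beta}=\xi^{[k]}_\beta$ site-by-site, since the Gram factorizations are only unique up to isometry and the joint $G$-invariance of the $E^{[k]}_j$ does not by itself fix consistent choices across an orbit. The remedy is to pick a canonical (e.g.\ psd square-root) factorization at one representative per $G$-orbit of vertices, transport it to the rest of the orbit via the induced permutation action $P_g$ on $\C^{\mathcal I^{\widetilde{\mathcal F}_k}}$, and---if the stabilizer of a vertex acts non-trivially on the local index set---enlarge the ancilla $\mathcal K_k$ so that the remaining ambiguity is absorbed into an orthogonal direction. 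The compatibility at overlapping group elements reduces to the joint $G$-invariance already assumed of the $E^{[k]}_j$, which is the technical heart of the argument.
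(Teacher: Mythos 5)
Your parts (i) and (ii), and the purification-to-psd direction of (iii), are exactly the paper's argument: embed vectors as diagonal matrices and extract diagonals for (i) and (ii), and read off $\bigl(E^{[i]}_j\bigr)_{\beta,\beta'}=\bigl(\tau^{[i]}_\beta\bigr)^{*}\tau^{[i]}_{\beta'}$ evaluated at the $(j,j)$ entry from the local operators of a purification. For the remaining direction the paper uses the same construction you sketch, namely $\tau^{[i]}_\beta=\sum_j a^{[i]}_{j,\beta}\otimes E_{jj}$ with $a^{[i]}_{j,\beta}$ Gram vectors of $E^{[i]}_j$; the only point of divergence is how the equivariant choice of Gram vectors is made. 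The paper simply takes the columns of $\sqrt{E^{[i]}_j}$: since $E^{[gi]}_j$ is the conjugate of $E^{[i]}_j$ by the permutation $P_g$ induced by $\beta\mapsto{}^g\beta$, the square roots are conjugate as well, so the columns transport along an orbit without any choice of representative or enlargement of the ancilla. This is more economical than your ``pick a representative and transport'' scheme, but it is the same idea.

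The step of yours that does not hold together is the remedy for vertices whose stabiliser acts non-trivially on the local facet set. If $gi=i$ but ${}^g\beta\neq\beta$, condition (b) of \cref{def:ogd} forces the local operators of any \ogd{} of a purification to satisfy $\xi^{[i]}_{{}^g\beta}=\xi^{[i]}_\beta$ literally, and hence the kernels they generate obey the \emph{one-sided} invariance $\bigl(E^{[i]}_j\bigr)_{{}^g\beta,\beta'}=\bigl(E^{[i]}_j\bigr)_{\beta,\beta'}$. A positive semidefinite \ogd{} only supplies the two-sided invariance $\bigl(E^{[i]}_j\bigr)_{{}^g\beta,{}^g\beta'}=\bigl(E^{[i]}_j\bigr)_{\beta,\beta'}$; at the middle vertex of $(\Lambda_2,C_2)$, for instance, the matrices $E^{[1]}_j$ are merely required to commute with the swap, and a diagonal $E^{[1]}_j$ with swap-invariant diagonal already satisfies this while violating the one-sided condition. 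For such a kernel \emph{no} Gram factorisation with $a_{{}^g\beta}=a_\beta$ exists at all, since that equality would force $\bigl(E^{[i]}_j\bigr)_{{}^g\beta,\beta'}=\bigl(E^{[i]}_j\bigr)_{\beta,\beta'}$; the obstruction sits in the kernel itself, not in the isometric freedom of the factorisation, so enlarging $\mathcal K_k$ cannot ``absorb'' it. To close this direction one must either restrict to situations where the one-sided invariance holds (e.g.\ when stabilisers of vertices act trivially on the adjacent facets), or show that a positive semidefinite \ogd{} can be replaced by one of the same cardinality whose kernels have the stronger invariance. Your sketch does not provide this, and, to be fair, neither does the paper's one-line justification: its claim $a^{[gi]}_{j,{}^g\beta}=a^{[i]}_{j,\beta}$ for the square-root columns actually reads $a^{[gi]}_{j,{}^g\beta}=P_g\,a^{[i]}_{j,\beta}$, which settles the cross-site compatibility you worried about but leaves exactly this fixed-vertex case open.
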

\begin{proof} For (i) start with an $(\Omega,G)$-decomposition $$M=\sum_{\alpha\in\mathcal I^{\widetilde{\mathcal F}}}w_{\alpha_{\mid_0}}^{[0]}\otimes\cdots \otimes w_{\alpha_{\mid_n}}^{[n]}$$ with all $w_\beta^{[i]}\in\C^{d_i}.$ Define $$v_\beta^{[i]}:={\rm diag}(w_\beta^{[i]})\in {\rm Mat}_{d_i}(\C)$$ and immediately check that the $v_\beta^{[i]}$ provide an $(\Omega,G)$-decomposition of $\sigma.$ Conversely, if $$\sigma=\sum_{\alpha\in\mathcal I^{\widetilde{\mathcal F}}} w_{\alpha_{\mid_0}}^{[0]}\otimes \cdots \otimes  w_{\alpha_{\mid_n}}^{[n]}$$ is an \ogd{} of $\sigma,$ the vectors  $v_\beta^{[i]}\in\C^{d_i}$ that contain the diagonal elements of $w_\beta^{[i]}$ provide an \ogd{} of $M$. 

(ii) is proven exactly the same way, using that nonnegative diagonal matrices are psd and that psd matrices have a nonnegative diagonal.

For (iii) let $E_j^{[i]}$ be the psd matrices from a positive semidefinite \ogd{} of $M.$ Write  a Gram decomposition $$E_j^{[i]}=\left(a_{j,\beta}^{[i]^*} a_{j,\beta'}^{[i]}\right)_{\beta,\beta'}$$ with column vectors $a_{j,\beta}^{[i]}\in\C^{\mathcal I^{\widetilde{\mathcal F}_i}}$ such that   $$a_{j,{}^g\beta}^{[gi]}=a_{j,\beta}^{[i]}.$$ This is for example possible by taking the columns of $\sqrt{E_j^{[i]}}$ as the $a_{j,\beta}^{[i]}$.   Now  set $$\tau_\beta^{[i]}:=\sum_{j=1}^{d_i} a_{j,\beta}^{[i]}\otimes E_{jj}$$ and easily check that these local vectors  provide an \ogd{} of a purification $\xi$ of $\sigma$.
Conversely, let $$\xi\in {\rm Mat}_{d_0',d_0}(\C)\otimes\cdots\otimes {\rm Mat}_{d_n',d_n}(\C)$$ be an $(\Omega,G)$-purification of $\sigma$. Denote the local matrices from an \ogd{} of $\xi$ by $\tau_\beta^{[i]}$ and  define $$E_j^{[i]}:=\left(\left( \tau_\beta^{[i]^*}\tau_{\beta'}^{[i]}\right)_{jj}\right)_{\beta,\beta'}.$$ Is is now easily verified that this provides a positive semidefinite $(\Omega,G)$-decompo\-sition of $M$.
\end{proof}

The following corollary generalises several known inequalities on matrix ranks to the multipartite setup.

\begin{corollary} \label{cor:corresp}
Let $G$ act on the connected wsc $\Omega$. Then for all $M\in\C^{d_0}\otimes\cdots\otimes \C^{d_n}$ we have: 
\begin{itemize}
\item[(i)] $ \omgr(M)\leq {\rm nn\mbox{-}rank}_{(\Omega,G)}(M)$
\item[(ii)] ${\rm psd\mbox{-}rank}_{(\Omega,G)}(M) \leq {\rm nn\mbox{-}rank}_{(\Omega,G)}(M)$
 \item[(iii)] $\omgr(M)\leq {\rm psd\mbox{-}rank}_{(\Omega,G)}(M)^2.$
\end{itemize}
\end{corollary}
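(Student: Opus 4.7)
The plan is to deduce this directly by combining the correspondence Theorem~\ref{thm:corresp} with the already-established rank inequalities of Proposition~\ref{prop:ineq1}. The point of Theorem~\ref{thm:corresp} is that the three ranks attached to $M$ (rank, nonnegative rank, psd-rank in the $(\Omega,G)$-sense) coincide with the corresponding ranks (rank, separable rank, purification rank) of the associated diagonal matrix $\sigma\in\mathrm{Mat}_{d_0}(\mathbb C)\otimes\cdots\otimes \mathrm{Mat}_{d_n}(\mathbb C)$. So the inequalities for $M$ should follow from the inequalities for $\sigma$.

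More concretely, for (i) I would write
\[
\omgr(M) = \omgr(\sigma) \le {\rm sep\mbox{-}rank}_{(\Omega,G)}(\sigma) = {\rm nn\mbox{-}rank}_{(\Omega,G)}(M),
\]
where the first and last equalities are parts (i) and (ii) of Theorem~\ref{thm:corresp}, and the middle inequality is Proposition~\ref{prop:ineq1}(i). For (ii) I would chain
\[
{\rm psd\mbox{-}rank}_{(\Omega,G)}(M) = {\rm puri\mbox{-}rank}_{(\Omega,G)}(\sigma) \le {\rm sep\mbox{-}rank}_{(\Omega,G)}(\sigma) = {\rm nn\mbox{-}rank}_{(\Omega,G)}(M),
\]
using parts (iii) and (ii) of Theorem~\ref{thm:corresp} together with Proposition~\ref{prop:ineq1}(ii). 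Finally for (iii),
\[
\omgr(M) = \omgr(\sigma) \le {\rm puri\mbox{-}rank}_{(\Omega,G)}(\sigma)^2 = {\rm psd\mbox{-}rank}_{(\Omega,G)}(M)^2,
\]
by parts (i) and (iii) of Theorem~\ref{thm:corresp} combined with Proposition~\ref{prop:ineq1}(iii).

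There is no real obstacle here: all the work is packaged into the correspondence theorem and the earlier inequalities. The only thing worth double-checking is that each of the invoked statements applies without any additional hypothesis (connectedness of $\Omega$ is assumed in the corollary, and the local spaces $\mathcal V_i=\mathrm{Mat}_{d_i}(\mathbb C)=\mathcal B(\mathbb C^{d_i})$ on the $\sigma$-side are of the form required by Proposition~\ref{prop:ineq1}). Both conditions hold, so the proof reduces to writing down these three chains of (in)equalities. Note also the inequalities are vacuously true when the relevant rank on the right is $\infty$, so no existence question arises.
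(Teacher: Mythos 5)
Your proposal is correct and is exactly the paper's argument: the corollary is deduced immediately by combining the correspondence in \cref{thm:corresp} with the inequalities of \cref{prop:ineq1}, with the chains of (in)equalities you write out being the explicit form of that deduction. Your check that the hypotheses (connectedness of $\Omega$ and local spaces of the form $\mathcal B(\mathbb C^{d_i})$) are satisfied is a fine additional verification.
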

\begin{proof} This is immediate from \cref{thm:corresp} and \cref{prop:ineq1}.
\end{proof}

\section{Conclusions and Outlook}
\label{sec:outlook}

We have considered an element $v$ in a tensor product space and have  defined and studied an \ogd{} thereof. 
The weighted simplicial complex $\Omega$ determines the arrangement of the indices in the sum of elementary tensor factors, 
and  the decomposition is explicitly invariant under the group action of $G$. 

We have shown that every group action can be refined to become free (\cref{prop:extend}), and 
that if the group action is free then an \ogd{} exists for all invariant $v$ (\cref{thm:ogd}). 
These two results prove the existence of an \ogd{} after possibly increasing the weights of the facets in $\Omega$. 
In addition, we have shown that if the action of $G$ is blending, then every invariant $v$ has an \ogd{} as well (\cref{thm:genex}). 
We have also defined the separable \ogd{} and the $(\Omega,G)$-purification form, 
and have shown that they exist if the action of $G$ is free (\cref{thm:sep}), 
or free or blending (\cref{prop:puriex}), respectively.

We have also provided many inequalities between the different ranks, of which we would like to highlight the following. 
First we have generalised the relations between the $\omgr$ and its separable and purification counterpart  
(\cref{prop:ineq1}),  
then we have shown how much the rank increases when imposing invariance under a certain group action 
(\cref{thm:ineqfree}),
and finally we have shown that the tensor rank is the largest of all ranks 
(\cref{ex:ineqtr}). 

Finally, we  have applied our framework to nonnegative tensors, 
where we have first defined a nonnegative and a positive semidefinite \ogd{},
and then proved a correspondence with the decompositions above (\cref{thm:corresp}),
 and a generalisation of the results of \cite{De19} (\cref{cor:corresp}).

We remark that all  our existence results are constructive, meaning that they can be used to transform the tensor decomposition of \eqref{eq:tensdecomp} into an $(\Omega,G)$-decomposition. Only in the proof of  \cref{thm:genex} we have used a symmetric  decomposition of a specific finite-dimensional tensor, but this can also be found algorithmically, see for example \cite{Br10b}  (and it has to be found only once, independently of the vector in consideration).

An interesting open question is to find  effective algorithms that produce the optimal \ogd, and to analyse their computational complexity. 
On the line, such a procedure is described in \cite[Remark 6]{De19}, 
the symmetric case on the $n$-simplex is covered in \cite{Br10b}, 
and many other cases are analysed in \cite{Hi13}, 
but the general case is open.

A further question concerns approximate decompositions, which are often of great importance. 
The simplest example are low-rank approximations of matrices, which have numerous applications in data science and modelling, but for higher-dimensional tensors the best low-rank approximation might not even  exist \cite{Si08}. 
It would be interesting to study the existence and hardness of approximate \ogd s.

The latter could also be studied from another angle:  
Instead of fixing a rank and looking for the best approximation not exceeding this rank, one could fix a neighborhood of the tensor in consideration, and search for the tensor of lowest rank in this neighborhood. 
This is related to the notions of  \emph{border rank} and  \emph{border tensors}. 
While for matrices the usual rank can only decrease in the limit of a sequence, this fails to be true for higher-order tensors \cite{Bi80}. 
It would be interesting to examine under which conditions on the simplicial complex and the group action this phenomenon appears.


\end{document}